\newcommand{\RR}{\mathbb{R}}
\newcommand{\CC}{\mathbb{C}}
\newcommand{\HH}{\mathbb{H}}
\newcommand{\re}{\mathfrak{R}}
\newcommand{\im}{\mathfrak{I}}
\begin{document}

\title{
Multiple integral formulas for weighted zeta moments: the case of the sixth moment}
\author{S\'ebastien Darses --- Joseph Najnudel}
\date{}

\begin{abstract}
    We prove exact formulas for weighted $2k$th moments of the Riemann zeta function for all integer $k\geq 1$ in terms of the analytic continuation of an auto-correlation function. This latter enjoys several functional equations. One of them, following from a fundamental lemma of Bettin and Conrey (2013), yields to new formulas for the moments: our second main result is the case $k=3$, but there is no obstruction to obtain higher moments. This generalizes results by Titchmarsh (1928) for $k=1$ and $k=2$. 
    A basic and powerful tool is a special Fourier transform unveiled by Ramanujan (1915).
    In a nutshell, the new idea is to consider the associated structures to $\Gamma^k\zeta^k$, which enjoy remarkable properties that are not satisfied by the more classically studied structure $\Gamma\zeta^k$.
\end{abstract}

\address{IRL CRM--CNRS, Universit\'e de Montr\'eal, Centre National de la Recherche Scientifique --\newline
Aix-Marseille Universit\'e, CNRS, Centrale Marseille, I2M, Marseille, France} 
\email{sebastien.darses@univ-amu.fr}

\address{University of Bristol, School of Mathematics, Bristol, United Kingdom}
\email{joseph.najnudel@bristol.ac.uk}

\footnote{\textit{Keywords:}  Weighted zeta moments; Sixth moment; Riemann zeta function; Fourier transform; Eisenstein series}

\maketitle


\section{Introduction}

\subsection{Main results}

The Riemann zeta function, the weighted one Eisenstein series, and the auto-correlation function $A$ introduced in \cite{DH21a}, are defined by:
\begin{eqnarray*}
\zeta(z) & = &\sum_{n\ge1} \frac{1}{n^z}, \quad \re(z)>1, \\
S_0(z) & = & \sum_{n\ge1}d(n)e(nz), \quad \im(z)>0, \\
A(z) & = & \int_0^\infty \left(\frac{1}{xz}-\frac{1}{e^{xz}-1}\right)\left(\frac{1}{x}-\frac{1}{e^{x}-1}\right)dx, \qquad \re(z)>0,
\end{eqnarray*}
where $\mathfrak{R}(z)$ (resp. $\mathfrak{I}(z)$) is the real part (resp. imaginary part) of a complex number $z$; 
$d(n)$ denotes the number of positive divisors of an integer $n$ and $e(z)=e^{2\pi iz}$, $i^2=-1$. We set $\R_{>0}=(0,\infty)$ and $\mathds{1}_{\cal E}$ is the indicator function of a set $\cal E$.
A general formula relates the following weighted zeta moments to the analytic continuation of $A$ on $\CC'=\CC\setminus (-\infty,0]$ (see Section \ref{analyticcontinuation}):
\begin{theo}\label{main-lemma}
For all integer $k\ge1$ and all $\delta\in(0,\pi)$,
\begin{eqnarray*}
    \int_{-\infty}^{\infty} 
 \left|\zeta\left(\frac{1}{2}+it\right)\right|^{2k} \frac{e^{ k(\pi -\delta) t}}{\cosh(\pi t)^k}\ dt
& = & 2\ \frac{e^{ik\frac{\delta-\pi}{2}}}{\pi^{k-1} }
 \int_{\RR_{>0}^{k}}  
\prod_{j=1}^{k} A(- u_j e^{i \delta})\ \delta_1\left( \prod_{j=1}^k u_j \right) \prod_{j = 1}^k du_j,
\end{eqnarray*}
where $\delta_1(u_1\cdots u_k)du_1\cdots du_k$ is the surface measure supported on $\{u_1\cdots u_k=1\}\subset\RR_{>0}^{k}$, which, seen as a distribution, is $\underset{\varepsilon \rightarrow 0^+}{\lim}\ \varepsilon^{-1} \mathds{1}_{1\leq u_1\cdots u_k \leq 1+\varepsilon}$. In particular, $\delta_1(u_1)du_1$ is the Dirac mass at $1$.
\end{theo}

To obtain more explicit formulas, we use three (exact) functional equations of $A$, a crucial one following from a fundamental lemma of Bettin and Conrey (see Section \ref{bettin-conrey}). As an application:

\begin{theo}\label{main-theorem}
For all $\delta\in(0,\pi/2)$,
\begin{eqnarray*}
\int_{-\infty}^{\infty}  \left|\zeta\left(\frac{1}{2}+it\right)\right|^{6} \frac{e^{ 3(\pi -\delta) t}}{\cosh(\pi t)^3}\ dt
    & = & 96\pi\  \re\int_1^\infty\int_1^{\infty} e^{i\frac{\delta}{2}} S_0(e^{i\delta} u) S_0(e^{i\delta} v) S_0(-e^{-i\delta} uv) dudv \ \\
        &  & \ -\frac{12}{\pi^2}\ \re \left(i e^{i\frac{\delta}{2}} \sum_{j=1}^5 R_j(\delta) \right),
\end{eqnarray*}
where the following remainders satisfy, for $\delta > 0$ sufficiently small, 
\begin{eqnarray*}
R_1(\delta) = 2\int_0^1\int_0^1 S(u)R(v)\b S(uv) dudv 
    & \ll & \delta^{-1} \log^{9/2}(\delta^{-1}) \\
R_2(\delta) = \int_0^1\int_0^1 S(u)S(v)\b R(uv)dudv     & \ll & \delta^{-1} \log^{4}(\delta^{-1}) \\
R_3(\delta) = \int_0^1\int_0^1 R(u)R(v)\b S(uv) dudv & \ll & \delta^{-1/2} \log^{5/2}(\delta^{-1}) \\
R_4(\delta) = 2\int_0^1\int_0^1 R(u)S(v)\b R(uv)dudv 
    & \ll & \delta^{-1/2} \log^{2}(\delta^{-1})\\
R_5(\delta) = \int_0^1\int_0^1 R(u)R(v)\b R(uv)dudv         & \ll  & 1,
\end{eqnarray*}
with, $\gamma$ being Euler's constant, and $u>0$,
\begin{eqnarray*} 
S(u) & = & 2\pi i\ e^{-i\delta} S_0(-e^{-i\delta}/u)/u \\
R(u) & = & -A(ue^{i\delta}) -\log(u) + \log(2\pi) -\gamma + \frac{i\pi}{2} - i\delta. 
\end{eqnarray*}
\end{theo}
In this article, the notation $X \ll Y$ or $X=O(Y)$ (resp. $X \ll_\alpha Y$ or $X=O_\alpha(Y)$) means $|X|\leq C Y$ for some constant $C>0$ (resp. $|X |\leq  C_{\alpha} Y$ for some $C_{\alpha} >0$ depending only on $\alpha$). Sometimes, for the sake of readability, the dependence on $\delta$ is removed, like within $S$ and $R$.
\medskip

The previous formula generalizes the ones that can be obtained for the 2nd and 4th moments:

\begin{theo}[Titchmarsh 1928, reformulated] \label{moment4}
For all $\delta\in(0,\pi/2)$,
\begin{eqnarray*}
\int_{-\infty}^{\infty} \left|\zeta\left(\frac{1}{2}+it\right)\right|^{2} \frac{e^{(\pi-\delta) t}}{\cosh(\pi t)} dt
     & = & 4\pi\ e^{i\frac{\delta}{2}} S_0(e^{i\delta})
     + \ 2i e^{-i\frac{\delta}{2}}\left( \log(2\pi)-\gamma -\frac{i\pi}{2} - A(e^{-i\delta}) + i\delta \right),
\end{eqnarray*}
and 
\begin{eqnarray*}
\int_{-\infty}^{\infty}  \left|\zeta\left(\frac{1}{2}+it\right)\right|^{4} \frac{e^{ 2(\pi -\delta) t}}{\cosh(\pi t)^2} dt
    & = & 16\pi \int_1^\infty \left|S_0(e^{i\delta}u)\right|^2 du  + \wdt{R_1}(\delta) + \wdt{R_2}(\delta),
\end{eqnarray*}
where, $R$ and $S$ being defined as above, the remainders read and satisfy, for $\delta > 0$ small enough, 
\begin{eqnarray*}
\wdt{R_1}(\delta) = \frac{8}{\pi}\ \re\int_0^1 \b S(u) R(u) du \ \ll\  \delta^{-1/2} \log^{2}(\delta^{-1}),  &  \quad & 
\wdt{R_2}(\delta) = \frac{4}{\pi}\int_0^1 |R(u)|^2 du \ll 1.
\end{eqnarray*}
\end{theo}

At this stage, let us make four comments. 

(i) One interest of Theorem \ref{main-lemma} is that it provides a general framework in which the formulas of Titchmarsh in Theorem \ref{moment4}
for $k=1$ and $k=2$ can be interpreted (as an example, the $S_0(e^{i\delta}\cdot 1)$ in the 2nd moment stems from a Dirac mass at $u=1$).
As Titchmarsh \cite{Tit28}, our method is based on Fourier analysis, but it relies on two functions, $A$ and $B$ (defined in Section \ref{analyticcontinuation}), their analytic continuation and their Mellin or Fourier transforms, while the core function in \cite{Tit28} is an {\em hybrid} function in between $A$ and $B$. This allowed us to prove Theorem \ref{main-theorem} (case $k=3$).

(ii) Concerning higher moments, 
there exists a formula involving a $L^2$ norm on $(0,\infty)$ related to the inverse Mellin transform of $\Gamma\zeta^k$ (see \cite[p.160-161]{Tit86}). One interest of Theorem \ref{main-theorem} lies in the remarkable properties of $A$ and $B$ (related to Bettin and Conrey's period function $\psi$, see Section \ref{bettin-conrey}), which proved fruitful in locating the integral on $(1,\infty)$, and then {\em generalizing the main term} of the 4th moment. The threshold $1$ will turn out to be the same for all $k\geq 2$. We also obtain explicit remainders for which a $L^2$ norm on $(0,1)$ related to $A$ is controlled. 

(iii) It is known that $S_0(e^{i\delta})$ and $\int_1^\infty |S_0(e^{i\delta}u)|^2 du $, for the 2nd and 4th moments respectively, gives the desired order of magnitude, namely $\delta^{-1}\log\left(\delta^{-1}\right)$ for the 2nd moment and $\delta^{-1}\log^4\delta^{-1}$ for the 4th one. The estimates of the remainders in our formula for the 6th moment are likely not optimal, but they are sufficient to show that the double integral main term is equivalent to the 6th moment as $\delta\to 0^+$, since it is known unconditionally that the 6th moment is at least of order $\delta^{-1}\log^9\delta^{-1}$.
The study of our main term will be performed in a forthcoming paper.

(iv) The weighted moments studied in this article are directly connected to the classic ones with exponential weights.  Indeed, 
\begin{eqnarray*}
    \int_{-\infty}^{\infty} 
 \left|\zeta\left(\frac{1}{2}+it\right)\right|^{2k} \frac{e^{ k(\pi -\delta) t}}{\cosh(\pi t)^k}dt  & = 
&  \int_{-\infty}^{0 } 
 \left|\zeta\left(\frac{1}{2}+it\right)\right|^{2k} \frac{e^{ k(\pi -\delta) t}}{\cosh(\pi t)^k} dt\\
   &  & \  +\ 2^k \int_{0}^{\infty} 
 \left|\zeta\left(\frac{1}{2}+it\right)\right|^{2k} \frac{e^{  -\delta k t}}{ (1 + e^{-2 \pi t})^k} dt
\\ & =  &  O_k(1) + 2^k \int_{0}^{\infty} 
e^{  -\delta k t} \left|\zeta\left(\frac{1}{2}+it\right)\right|^{2k} dt.
\end{eqnarray*}
These moments are also are connected with the moments of the form $\int_0^T |\zeta(1/2+it)|^{2k}dt$, via Tauberian theorems (see \cite[7.12]{Tit86}). 
The Lindel\"of hypothesis is equivalent to the fact that
\begin{eqnarray*}
\int_{-\infty}^{\infty} 
\left|\zeta\left(\frac{1}{2}+it\right)\right|^{2k} \frac{e^{ k(\pi -\delta) t} }{\cosh(\pi t)^k} dt
     & = & \delta^{-1 + o(1)}
\end{eqnarray*}
when $\delta \rightarrow 0^+$, for fixed $k$, and the moment conjectures by Keating and Snaith can be restated as 
 $$\int_{-\infty}^{\infty}  \left|\zeta\left(\frac{1}{2}+it\right)\right|^{2k} \frac{e^{ k(\pi -\delta) t} }{\cosh(\pi t)^k} dt \underset{\delta \rightarrow 0+}{\sim}  c_k\ \delta^{-1} \log^{k^2} (\delta^{-1})$$
where $c_k > 0$ is a suitable constant.

\medskip

\subsection{Previous works and motivation}
\subsubsection{The moments of $\zeta$}
The moments of the Riemann zeta function have been intensively studied in the last decades, starting from the work of 
Hardy and Littlewood \cite{HL18}, who proved the following equivalence 
$$
\frac{1}{T} \int_0^{T} \left|\zeta \left(\frac12 + it\right)\right|^{2} dt 
\underset{T \rightarrow \infty}{\sim} \log T.
$$
Ingham \cite{Ing26} then proved that
$$
\frac{1}{T} \int_0^{T} \left|\zeta \left(\frac12 + it\right)\right|^{4} dt 
\underset{T \rightarrow \infty}{\sim} (2 \pi^2)^{-1} \log^4 T,
$$
and it is conjectured  that the moment of order $2k$ is equivalent to a constant times 
$\log^{k^2} T$ for all $k \geq 0$. An explicit expression of the constant has been obtained by Keating and Snaith \cite{KS00}, using a conjectured connection between the Riemann zeta function and the characteristic polynomial of random unitary matrices (see also Katz and Sarnak \cite{KS99}). 
Conditionally on the Riemann hypothesis (RH), Soundararajan \cite{Sou09} proved that the $2k$th moment is dominated by $(\log T)^{k^2 + \varepsilon}$ for all $k \geq 0$, $\varepsilon > 0$, and Harper \cite{Har13} improved this estimate 
to $O_k((\log T)^{k^2})$. 

Unconditionally, only upper bounds with positive powers of $T$ depending on $k$ are known (see e.g. \cite[p.212]{Ivi03} with the notation $A=2k$, and \cite[p.173]{Tit86} in the setting of weighted moments). Again unconditionally, the lower bounds of the form $(\log T)^{k^2}$ are obtained for integers $k$ by Titchmarsh \cite[p.174]{Tit86}, for half integers by Ramachandra \cite{Ram78}, generalized by Heath-Brown \cite{HB81} to any rational, by Radziwi\l \l\ and Soundararajan \cite{RS13} to any real numbers $k>1$, and for $0<k\leq 1$ by Heap and Soundararajan \cite{HS22}.

Asymptotic expansions can be obtained in various settings for the 2nd and the 4th moment: either $\int_0^{T} |\zeta (\frac12 + it)|^{2} dt$ or weighted moments $\int_0^\infty|\zeta(\frac12 + it)|^2 e^{-\delta t}dt$, $\delta>0$ (see e.g. \cite[p.164]{Tit86}, and Bettin and Conrey \cite{BC13a} for convergent asymptotic series), and either $\int_0^{T} |\zeta (\frac12 + it)|^{4} dt$ or $\int_0^\infty|\zeta(\frac12 + it)|^4 e^{-\delta t}dt$ (see Heath-Brown \cite{HB79} and Atkinson \cite{Atk41}).

The study of moments also appears in a wide variety of problems: mean square of the product of the
Riemann zeta-function and a Dirichlet polynomial \cite{BCH85}, shifted moments and correlation of divisor sums (see \cite{CFKRS05,CK19,DFI94,Ng21}), spectral theory and trace formulas (see e.g. \cite{IM06}), exponential moments of the argument of  zeta \cite{Naj20} for instance. The moments of the Riemann zeta function are also related to extreme values on random intervals of the critical line: see for example the conjecture by Fyodorov, Hiary and Keating \cite{FHK12}, which has been partially solved by Arguin, Bourgade and Radziwi\l \l \,  in \cite{ABR20} and in \cite{ABR23}, after earlier progress by Najnudel in \cite{Naj18}, and by Arguin, Belius, Bourgade, Radziwi\l \l \, and Soundararajan in \cite{ABBRS19}. For the overall maximum of $\zeta$ on the vertical segment $[1/2, 1/2 + i T]$, the Lindel\"of hypothesis states that it grows slower than any positive power of $T$ when $T$ goes to infinity: Bourgain \cite{Bou17} proved that it is dominated by $T^{\alpha}$
for all $\alpha > 13/84$.


\subsubsection{Bettin and Conrey's period function and its applications}

In a series of papers in 2013, Bettin and Conrey made several contributions to the theory of period functions related to Lewis-Zagier's theory of Maass forms, establishing reciprocity formulas, revealing remarkable property of cotangent sums, see \cite{BC13a,BC13b, MR16,ABB17}. Especially, they study the period function $\psi$ associated to a weight one Eisenstein series $E_1$. 
When comparing \cite[Theorem 1]{BC13b} and \cite[Theorem 1]{DH21a}, a connection between $\psi$ and $A$ had been found in \cite{DH21a}. This turned out to be contained in the proof of \cite[Lemma 1, Part one, p.5717]{BC13b} if one notices that the inverse Mellin transform of $\zeta(s)\zeta(1-s)/\sin(\pi s)$ is basically $A$ (see Section \ref{bettin-conrey}).

One application of their study of $\psi$, namely its expansion around $1$, is the following closed-form formula obtained in \cite{DH23}. For all integer $N\geq 0$, with $\sum_\varnothing :=0$,
\bean 
\frac{(-4)^N}{2}\int_{-\infty}^\infty t^{2N}  \left|\zeta\left(\frac{1}{2}+it\right)\right|^2 \frac{dt}{\cosh(\pi t)}  = \log(2\pi)-\gamma -4N + \left(\frac{4^N}{2}-1\right) B_{2N} + \sum_{j=2}^{2N} T_{2N,j}\frac{\zeta(j)B_{j}}{j}, \label{moment-poly}
\eean
where
$
T_{N,j} = (j-1)!\sum_{2\leq n \leq N} \binom{N}{n} 2^{n} \left[(-1)^n S(n+1,j) + (-1)^j S(n,j-1)\right]$ are integers, $B_j$ are the Bernoulli numbers and $S(n,j)$ the Stirling numbers of the second kind. As a determinate moment problem (in measure theory), this identity is a full characterisation of $\left|\zeta\left(\frac{1}{2}+it\right)\right|$. 
This motivated us to explore further applications of the fundamental works of Bettin and Conrey.

The previous identities have been obtained while studying generalizations \cite{DH21b,ADH22} of the Nyman-Beurling criterion for RH (see \cite{BDBLS00}). One framework generates a Hankel Gram matrix $H$ whose diagonal entries are given by the above formula, containing the arithmetic of $\zeta$. RH then reads as an inverse problem $Hx=b$ where $b$ is a suitable target vector. 


\subsection{Additional notations, basic functions and formulas} Throughout the paper, $x,y,t,u,v$ are real numbers, and $s,z,w$ are complex numbers. 

Moreover, $j,k,\ell, m,n, q, r$ denote positive integers, while $i^2=-1$.

The Poincar\'e upper half-plane is $\HH=\{x+iy, y>0\}$, and we will also need to consider the right half-plane $\re_{>0}=\{x+iy, x>0\}$. 

Often we write $\Gamma\cdot \zeta (s)=\Gamma(s)\zeta(s)$, where the Gamma function is defined as usual as:
\bean
\Gamma(s) & = & \int_0^\infty e^{-x}x^{s-1}dx, \quad \mathfrak{R}(s)>0.
\eean
We recall Euler's reflection formula for $0<\re(s)<1$, and real $t$:
\bean
\Gamma(s)\Gamma(1-s) & = & \frac{\pi}{\sin(\pi s)} \\
\left|\Gamma\left(\frac{1}{2}+it\right)\right|^2 & = & \frac{\pi}{\cosh(\pi t)},
\eean
where $\d \cosh(x)=\frac{e^{x}+e^{-x}}{2}$. Set for $\re(z)> 0$,
\bea
\phi_1(z) & = & \frac{1}{e^{z}-1}-\frac{1}{z},
\eea
and $\phi_1(0)=-1/2$ (which is the Bernoulli number $B_1$). Notice that $\phi_1$ is continuous on $[0,\infty)$ and $\phi_1(x)<0$ for all $x\ge 0$, so that $A(u)>0$ for all $u>0$. \\


Let $f\in L^1(\RR)$. Its Fourier transform $\cal F f$ is well-defined as, for real $w$:
\begin{eqnarray*}
    \cal F f(w) & = & \int_{-\infty}^\infty e^{-ixw}f(x)dx,
\end{eqnarray*}
and also defined if $f\in L^2(\RR)$ by means of Fourier-Plancherel theory.
The inversion formula then reads for $g\in L^1(\RR)+L^2(\RR)$, and all real $x$,
\begin{eqnarray*}
    \cal F^{-1}g(x) & = & \frac{1}{2\pi} \int_{-\infty}^\infty e^{ixw}g(w)dw.
\end{eqnarray*}

Set $\re(s)=\sg$. For a function $g:\R^+\to\R$ such that $\d \int_0^\infty |g(x)|x^{\sg-1}dx<\infty$ for all $\sg\in(0,1)$, one can define its Mellin transform in the critical strip $\{0<\sigma<1\}$:
\bean
\cal Mg(s) & = & \int_0^\infty g(x)x^{s-1}dx.
\eean

Let us recall the fundamental formula (see \cite{Tit86}, p. 23, formula (2.7.1)): 
\bea \label{mellin-gamma-zeta}
\cal M(\phi_1)(s) = \int_0^\infty \left(\frac{1}{e^{x}-1}-\frac{1}{x}\right)x^{s-1}dx = \Gamma\cdot\zeta(s), \quad 0<\mathfrak{R}(s)<1.
\eea

\subsection{Outline}
In Section \ref{AandB}, we provide the analytic continuation of the functions $A$ and $B : v \mapsto e^{v/2} A(e^v)$, and of their $k$-fold convolution for integers $k \geq 2$. Using Fourier inversion, we deduce a proof of Theorem \ref{main-lemma}. Moreover, from a reformulation of a result by Bettin and Conrey, we give an expression of $A$ in terms of Eisenstein series. 

In Section \ref{proofweightedmoment}, we apply results of Section \ref{AandB} in order to prove Theorem \ref{main-theorem} and Theorem \ref{moment4}. 

\medskip

\section{The functions $A$ and $B$, and their $k$-fold convolution} \label{AandB}

\subsection{Useful transforms and analytic continuation of $A$ and $B$} \label{analyticcontinuation}

At the end of his paper \cite{Ram15}, Ramanujan obtained a remarkable Fourier transform, involving the following function:
\bean
\R\to\R_{>0} , \quad v\longmapsto \int_0^\infty\left(\frac{1}{e^{xe^v}-1}-\frac{1}{xe^v}\right)\left(\frac{1}{e^{xe^{-v}}-1}-\frac{1}{xe^{-v}}\right)dx & =: & B(2v).
\eean
Due to the change of variable $x'=xe^{-v}$ in the integral, we have for all real $v$,
\begin{eqnarray} 
    B(v) & = & e^{v/2} A(e^v),
\end{eqnarray}
where $A$ is defined on real numbers $u>0$ by
\begin{eqnarray} \label{defA}
A(u) = \int_0^\infty \left(\frac{1}{xu}-\frac{1}{e^{xu}-1}\right)\left(\frac{1}{x}-\frac{1}{e^{x}-1}\right)dx = \int_0^\infty \phi_1(xu)\phi_1(x) dx.
\end{eqnarray}
We introduce as in \cite{BC13b} (p. 5716)  the following function $Q$ (which only differs here by a factor $\pi$, the authors considering $1/\sin(\pi s)$ instead of $\Gamma(s) \Gamma(1-s)$):
\begin{eqnarray*}
    Q(s) & = & \Gamma\cdot\zeta(s)\ \Gamma\cdot\zeta(1-s).
\end{eqnarray*}
Recall the basic facts $\zeta(\b s)=\b{\zeta(s)}$ and $\Gamma(\b s)=\b{\Gamma(s)}$, useful here when $s=1/2+it$.\\

We now give below two useful transforms of $A$ and $B$. The first identity (\ref{mellinA}) concerning $\cal M A$ is a remark that allows an important reformulation of Bettin and Conrey's Lemma 1 in \cite{BC13b}. The second one (\ref{fourierB}) concerning $\cal F B$ is a reformulation of an identity of Ramanujan \cite{Ram15} written in terms of $\cal F^{-1}$. In \cite{DH23}, we recalled a proof based on the Mellin-Plancherel isometry, and we provide here a short proof of (\ref{fourierB}) based on (\ref{mellinA}).
\begin{lemm} 
We have
\bea \label{mellinA}
\cal M A(s) & = & Q(s), \quad 0<\mathfrak{R}(s)<1.
\eea
Moreover, $B\in L^1(\R)$ and for all real $t$,
\bea \label{fourierB}
\cal F B(t) & = & \left|\zeta\left(\frac{1}{2}+it\right)\right|^{2} \frac{\pi}{\cosh(\pi t)}.
\eea
Finally, $B$ extends to a holomorphic function on $\cal W=\{x+iy, -\pi<y<\pi\}$, and $A$ extends to a holomorphic function on  $\C'$.
\end{lemm}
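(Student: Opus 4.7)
\textbf{Plan for \eqref{mellinA}.} Inserting the definition \eqref{defA} of $A$ into $\int_0^\infty A(u)u^{s-1}\,du$ and aiming to exchange the two integrals, I would first verify Fubini--Tonelli: with $\sigma=\re(s)\in(0,1)$, the substitution $y=xu$ in the inner $u$-integral shows that the double integral of the modulus factors as
\begin{equation*}
\left(\int_0^\infty |\phi_1(x)|\,x^{-\sigma}\,dx\right)\left(\int_0^\infty |\phi_1(y)|\,y^{\sigma-1}\,dy\right),
\end{equation*}
and both factors are finite thanks to $\phi_1(0)=-1/2$ and $\phi_1(x)\sim -1/x$ as $x\to\infty$. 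Repeating the same substitution with the signed integrand then gives $\mathcal{M}A(s)=\mathcal{M}\phi_1(1-s)\cdot\mathcal{M}\phi_1(s)=\Gamma\zeta(1-s)\,\Gamma\zeta(s)=Q(s)$, by \eqref{mellin-gamma-zeta}.

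\textbf{Plan for \eqref{fourierB} and $B\in L^1(\RR)$.} Since $\phi_1<0$ on $[0,\infty)$, one has $A>0$, hence $B>0$ on $\RR$. The change of variable $u=e^v$ then gives
\begin{equation*}
\int_{-\infty}^\infty B(v)\,dv \;=\; \int_0^\infty A(u)\,u^{-1/2}\,du \;=\; \mathcal{M}A(1/2) \;=\; Q(1/2) \;<\; \infty,
\end{equation*}
so $B\in L^1(\RR)$. The same substitution in $\mathcal{F}B(t)$ yields $\mathcal{F}B(t)=\mathcal{M}A(1/2-it)=Q(1/2-it)$, and I would conclude using $\overline{\Gamma(s)}=\Gamma(\bar s)$ and $\overline{\zeta(s)}=\zeta(\bar s)$ to rewrite $Q(1/2-it)=|\Gamma(1/2+it)|^2|\zeta(1/2+it)|^2$, followed by Euler's reflection $|\Gamma(1/2+it)|^2=\pi/\cosh(\pi t)$.

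\textbf{Plan for the holomorphic extensions.} I would work directly from the Ramanujan-type integral representation
\begin{equation*}
B(w) \;=\; \int_0^\infty \phi_1(xe^{w/2})\,\phi_1(xe^{-w/2})\,dx,
\end{equation*}
taken for complex $w=u+iv$ with $|v|<\pi$. For such $w$, the rays $\{xe^{\pm w/2}:x>0\}$ have argument $\pm v/2\in(-\pi/2,\pi/2)$, so they avoid the poles $2\pi i(\ZZ\setminus\{0\})$ of $\phi_1$ and satisfy $\re(xe^{\pm w/2})=xe^{\pm u/2}\cos(v/2)>0$. Hence $\phi_1$ is analytic at $xe^{\pm w/2}$, bounded near $x=0$ with limit $-1/2$, and satisfies $\phi_1(z)=-1/z+O(e^{-\re(z)})$ at infinity, so the integrand is $O(1/x^2)$ at infinity and $O(1)$ near $0$, uniformly on compact subsets of the strip. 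Absolute convergence together with analyticity of the integrand in $w$ delivers holomorphy of $B$ on $\mathcal{W}$, via Morera or differentiation under the integral sign. Finally, $A$ extends to $\CC'$ by setting $A(u)=u^{-1/2}B(\log u)$, with $\log$ the principal branch, whose image lies in $\mathcal{W}$.

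\textbf{Main obstacle.} The delicate point is the uniform tail control as $|v|\to\pi^-$: then $\cos(v/2)\to 0$, the exponential decay of $\phi_1(xe^{\pm w/2})$ degenerates, and a pole of $\phi_1$ approaches the boundary ray. I would therefore only establish uniformity on compact subsets of $|v|<\pi$, which suffices for holomorphy on the open strip $\mathcal{W}$ and is consistent with the exponential decay rate $e^{-\pi|t|}$ of $|\Gamma(1/2+it)|^2$ that fixes the natural width of the strip.
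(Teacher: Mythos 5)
Your proofs of \eqref{mellinA}, of $B\in L^1(\mathbb{R})$, and of \eqref{fourierB} follow the paper's argument essentially verbatim: Fubini--Tonelli with the factorization of the absolute double integral, the identity $\mathcal{M}A(s)=\mathcal{M}\phi_1(s)\,\mathcal{M}\phi_1(1-s)=Q(s)$ via \eqref{mellin-gamma-zeta}, the case $\sigma=1/2$ for integrability of $B$, and $\mathcal{F}B(t)=\mathcal{M}A(1/2-it)=Q(1/2-it)$ combined with Euler's reflection formula. All of that is correct.

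For the holomorphic extension you take a genuinely different route. The paper deduces analyticity of $B$ on $\mathcal{W}$ from the Fourier inversion formula
\begin{equation*}
B(z)=\frac{1}{2\pi}\int_{-\infty}^{\infty}e^{izt}\,Q(1/2-it)\,dt,
\end{equation*}
using that $Q(1/2-it)$ decays like $e^{-\pi|t|}$ (up to polynomial factors), so the integral converges and is holomorphic exactly for $|\Im z|<\pi$; the continuation of $A$ to $\mathbb{C}'$ then follows from $A(z)=z^{-1/2}B(\log z)$ as in your plan. You instead continue the original integral representation $B(w)=\int_0^\infty\phi_1(xe^{w/2})\phi_1(xe^{-w/2})\,dx$ directly into the strip, using the bound $|\phi_1(z)|\ll_\alpha\min(1,|z|^{-1})$ on sectors $|\operatorname{Arg}z|<\alpha<\pi/2$ together with Morera; your observation that uniformity is only needed (and only available) on compact subsets of $\mathcal{W}$ is exactly the right point. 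This argument is sound, and it is in fact the content of the paper's \emph{next} lemma, which establishes that the defining integrals of $A$ and $B$ remain valid on $\mathfrak{R}_{>0}$ and $\mathcal{W}$ respectively. The trade-off: your route is more self-contained (no need for Fourier inversion or decay of $Q$ on the critical line at this stage), while the paper's route simultaneously produces Ramanujan's formula \eqref{inversefourierB}, which is needed later for the convolution identities; in the paper both representations end up being proved anyway.
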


\begin{proof}
By the Fubini--Tonelli theorem, we have for $\sigma\in(0,1)$,
\begin{eqnarray}\label{cond-mellin-A}
\nonumber \int_0^\infty x^{\sigma-1}|A(x)|dx & = & \int_0^\infty \int_0^\infty \left( \frac{1}{t}-\frac{1}{e^t-1} \right)\left(\frac{1}{tx}-\frac{1}{e^{tx}-1}\right)x^{\sigma-1}dx\ dt  \\
    & = & \int_0^\infty \left(\frac{1}{x}-\frac{1}{e^{x}-1}\right)x^{\sigma-1}dx\int_0^\infty \left( \frac{1}{t} - \frac{1}{e^t-1} \right) t^{-\sigma} dt <\infty.
\end{eqnarray}
Hence we can write for $0<\mathfrak{R}(s)<1$, using (\ref{mellin-gamma-zeta}) and noting $-s=1-s-1$,
\bean
\cal M A(s) = \int_0^\infty \cal M(\phi_1(t\ \cdot))(s)\ \phi_1(t)dt = \cal M(\phi_1)(s) \int_0^\infty \phi_1(t) t^{-s}dt = Q(s).
\eean
We have $B\in L^1(\R)$ since, due to (\ref{cond-mellin-A}) with $\sigma=1/2$,
\bean
\int_{-\infty}^{\infty} 
|B(x)| dx = \int_{-\infty}^{\infty} 
e^{x/2} A(e^x) dx = \int_0^{\infty} y^{1/2-1} A(y) dy <\infty.
\eean
Hence we can compute
\begin{eqnarray*}
  \cal F B(t) = \int_{-\infty}^{\infty} e^{-i t x} 
e^{x/2} A(e^x) dx =  \int_0^{\infty} y^{1/2-it} A(y) \frac{dy}{y} = \cal M A\left(\frac{1}{2} - it\right).
\end{eqnarray*}
Therefore, from (\ref{mellinA}),
\begin{eqnarray*}
  \cal F B(t) = Q\left(\frac{1}{2} - it\right) = \left|\Gamma\cdot\zeta\left(\frac{1}{2}+it\right)\right|^2,
\end{eqnarray*}
which yields to the desired equality due to Euler's reflection formula.

Hence, we can deduce from (\ref{fourierB}) that for all real $z$,
\begin{eqnarray} \label{inversefourierB}
B(z) & = &  \frac{1}{2 \pi} 
\int_{-\infty}^{\infty} e^{i z t} Q(1/2 - it) dt =  \frac{1}{2 \pi} 
\int_{-\infty}^{\infty} e^{i z t}\left|\zeta\left(\frac{1}{2}+it\right)\right|^{2} \frac{\pi dt}{\cosh(\pi t)},
\end{eqnarray}
which is Ramanujan's formula written with $\zeta$ and $\cosh$ (instead of $\Xi$ and $\Gamma$).

Let then notice that the right hand side is analytic on $\cal W=\{x+iy, -\pi<y<\pi\}$ and provides the analytic continuation of $B$ on $\cal W$.
Writing 
\begin{eqnarray*}
    A(z) =  z^{-1/2} B(\log z) = e^{-(1/2) \log z} B(\log z),
\end{eqnarray*}
provides the analytic continuation of $A$ on $\C'$, through the principal branch of $\log : \C'\to \cal W$. 
\end{proof}

Notice that we have, for $z^{-1/2 + it}= e^{(-1/2 + it) \log z}$, 
\begin{equation} A(z) = \frac{1}{2 \pi} 
\int_{-\infty}^{\infty} z^{-1/2 + it}  Q(1/2 - i t) dt 
= \frac{1}{2 \pi} 
\int_{-\infty}^{\infty} z^{-1/2 + it} \left|\zeta\left(\frac{1}{2}+it\right)\right|^{2} \frac{\pi dt}{\cosh(\pi t)} .  \label{inverseMellinA}
\end{equation}

We end this subsection with two remarks:
\begin{enumerate}
    \item[(a)] Due to crude estimates on $\zeta(1/2+it)$ and its derivatives (from Cauchy's integral formula, see \cite{Ten22}), we deduce that $\cal F B$ belongs to the Schwarz class, and then $B$ does. We will need a "uniform" version of this result in the proof of Theorem \ref{main-lemma} (see Lemma \ref{boundB}).
\item[(b)] Finally, as claimed in the first page, the analytic extension of $A$ to $\re_{>0}$ can be simply performed through its initial definition on the real numbers, and the same remark can be done for $B$ on $\mathcal{W}$. 
\end{enumerate}

\begin{lemm}
    For all $z\in \re_{>0}$,
\begin{eqnarray} \label{Adroite}
A(z) & = & \int_0^\infty \left(\frac{1}{xz}-\frac{1}{e^{xz}-1}\right)\left(\frac{1}{x}-\frac{1}{e^{x}-1}\right)dx,
\end{eqnarray}
and for all $z \in \mathcal{W}=\{x+iy, -\pi<y<\pi\}$,  
\begin{eqnarray} \label{Adroite}
B(z) & = & \int_0^\infty\left(\frac{1}{e^{xe^{z/2}}-1}-\frac{1}{xe^{z/2}}\right)\left(\frac{1}{e^{xe^{-z/2}}-1}-\frac{1}{xe^{-z/2}}\right)dx .
\end{eqnarray}
\end{lemm}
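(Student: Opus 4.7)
The plan is to check that each of the two displayed integrals converges absolutely and locally uniformly in $z$ on the stated domain, defines a holomorphic function there, and coincides with the analytic continuation produced in the previous lemma. Since that lemma gives $A$ holomorphic on $\CC'\supset\re_{>0}$ and $B$ holomorphic on $\mathcal{W}$, and both are connected open sets in which the real axis (or its positive part) has limit points, the identity principle will then do the rest.

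For the formula for $A$: fix a compact $K\subset\re_{>0}$, so that $\re(z)\ge c>0$ and $|z|\le M$ for $z\in K$. Up to the two sign flips already accounted for in (\ref{defA}), the integrand is $\phi_1(xz)\phi_1(x)$. It has a removable singularity at $x=0$ (both factors tending to $-1/2$), hence is bounded uniformly in $z\in K$ on $(0,1]$. For $x\ge 1$, $\phi_1(x)=O(1/x)$; moreover $|\phi_1(xz)|\le 1/(x|z|)+1/|e^{xz}-1|$ and $|e^{xz}-1|\ge e^{cx}-1$, so $|\phi_1(xz)|=O_K(1/x)$, giving integrand $O_K(1/x^2)$ on $[1,\infty)$. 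This yields absolute convergence, uniformly on $K$. Since $z\mapsto\phi_1(xz)\phi_1(x)$ is holomorphic on $\re_{>0}$ for each $x>0$, the Weierstrass theorem applied to the partial integrals $\int_\varepsilon^M$ (holomorphic by a standard differentiation-under-the-integral argument) shows that the full integral is holomorphic on $\re_{>0}$. It matches $A$ on $(0,\infty)$ by (\ref{defA}), so they agree on all of $\re_{>0}$ by analytic continuation.

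For the formula for $B$: set $w=e^{z/2}$. When $z=x+iy\in\mathcal{W}$, the bound $|y|<\pi$ gives $\re(w)=e^{x/2}\cos(y/2)>0$ and similarly $\re(w^{-1})=e^{-x/2}\cos(y/2)>0$. The integrand reads $\phi_1(xw)\phi_1(xw^{-1})$, and the very same scheme of estimates as before (bounded near $0$, $O_K(1/x)$ at infinity for each factor, with constants locally uniform in $z$) yields absolute convergence and holomorphicity on $\mathcal{W}$. To identify the resulting function with $B$ it suffices to match values on $\RR$: for $z=v\in\RR$ the substitution $x'=xe^{-v/2}$ turns the integral into
\begin{equation*}
e^{v/2}\int_0^\infty\phi_1(x'e^v)\,\phi_1(x')\,dx'=e^{v/2}A(e^v)=B(v).
\end{equation*}
Two holomorphic functions on the connected open set $\mathcal{W}$ that coincide on $\RR$ must coincide everywhere, which closes the argument.

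The only substantive work is the locally uniform tail bound, which is essentially the same estimate already used to establish that $A$ is well defined on $\RR_{>0}$. A minor subtlety worth flagging is that the natural integral representation of $B$ converges on the \emph{full} strip $|y|<\pi$, even though the representation of $A(e^z)$ on $\re_{>0}$ would only converge for $|y|<\pi/2$ (where $\re(e^z)>0$); the pairing of $e^{z/2}$ with $e^{-z/2}$ inside the two factors of the $B$-integrand is exactly what widens the natural domain to $\mathcal{W}$.
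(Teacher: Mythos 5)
Your proof is correct and follows essentially the same route as the paper: locally uniform bounds on $\phi_1$ (bounded near $0$, $O(1/x)$ at infinity on compacta of the right half-plane, resp.\ of $\mathcal{W}$), holomorphy of the integral, and identification with $A$ and $B$ by uniqueness of analytic continuation. The only cosmetic difference is that you deduce holomorphy from the Weierstrass convergence theorem applied to truncated integrals, whereas the paper uses Morera's theorem via Fubini on closed contours; both rest on the same uniform convergence estimate.
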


\begin{proof}
We have, for $|z| \leq 1$,
\begin{eqnarray}
\left|\frac{1}{e^z - 1} - \frac{1}{z} \right| & = & O(1)
\end{eqnarray}
by the Taylor series expansion, which has a radius of convergence $2 \pi >1$. \\
On the other hand,  for $|z| > 1$, and 
$|\operatorname{Arg} (z)| < \alpha$ for some $\alpha \in (0, \pi/2)$, 
$$|e^{z} - 1| \geq e^{\Re(z)} - 1 \geq e^{ |z| \cos \alpha} - 1
  \gg_{\alpha} |z|,$$ 
and therefore 
\begin{eqnarray}
\left|\frac{1}{e^z - 1} - \frac{1}{z} \right| & = & O_{\alpha}(1/|z|).
\end{eqnarray}

Hence, for all $z\in \re_{>0}$ such that $|\operatorname{Arg} (z)| < \alpha$, 
\bean
\left|\frac{1}{e^z - 1} - \frac{1}{z} \right| & \ll_{\alpha} & \min( 1, |z|^{-1} ).
\eean
Now, for all $z_1, z_2 \in \re_{>0}$ such that $|\operatorname{Arg} (z_1)|, |\operatorname{Arg} (z_2)|  < \alpha$, 
\begin{eqnarray*}
\left| \left(\frac{1}{xz_1}-\frac{1}{e^{xz_1}-1}\right)\left(\frac{1}{xz_2}-\frac{1}{e^{xz_2}-1}\right) \right| & \ll_{\alpha} & \min( 1, |z_1|^{-1}|x|^{-1} )\min( 1, |z_2|^{-1}|x|^{-1} ).
\end{eqnarray*}
We deduce that in the lemma, the integral corresponding to $A$ is uniformly convergent for $z$ in any compact subset of $ \re_{>0}$, and the integral corresponding to $B$ is uniformly convergent for $z$ in any compact subset of $\mathcal{W}$. 
Hence, if we compose these integrals with a further complex integration along a closed contour included in $ \re_{>0}$ for $A$ and in $\mathcal{W}$ for $B$, we can reverse the order of the two integrations by applying Fubini's theorem, and then obtain zero since the quantities inside the integral on $[0, \infty)$ are holomorphic functions in $z$. Hence, the integrals of the lemma define holomorphic functions of $z$, which necessarily coincide with $A$ and $B$ by uniqueness of analytic continuation. 
\end{proof}


\subsection{Functional equations for $A$ and the lemma of Bettin and Conrey} \label{bettin-conrey}

Following \cite{BC13a,BC13b}, consider, for $\im(z)>0$, 
\bean
E_1(z) & = & 1-4\sum_{n\ge1}d(n)e^{2i\pi nz} \\
\psi(z) & = & E_1(z)-(1/z)\ E_1(-1/z),
\eean
and the analytic continuation of $\psi$ to $\C'$. 
The statement of the following lemma is contained within the proof of \cite[Lemma 1]{BC13b}, completed by the identity (\ref{inverseMellinA}). For the sake of completeness, we recall the arguments of the authors.

\begin{lemm}[Bettin and Conrey 2013, reformulated]
For all $z \in \mathbb{C}'$, the analytic continuations of $A$ and $\psi$ satisfy: 
\bean
    A(z) & = & \frac{i\pi}{4} \psi(z) + r(z),
\eean
where
$\d r(z)  = c \left(\frac{1}{z}+1\right) + \frac{1}{2}\left(\frac{1}{z}-1\right) \log(z)$,\ with\  $c=\frac{\log(2\pi)-\gamma}{2}$.
\end{lemm}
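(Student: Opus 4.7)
The plan is to follow the Mellin--Barnes strategy underlying \cite[Lemma 1]{BC13b}. The starting point is the inverse Mellin representation (\ref{inverseMellinA}),
$$A(z) = \frac{1}{2\pi i}\int_{(1/2)} Q(s)\,z^{-s}\, ds,$$
combined with a Mellin--Barnes representation of $S_0$. For $\im(z)>0$ and $c>1$, the identity $e^{-w}=\frac{1}{2\pi i}\int_{(c)}\Gamma(s)w^{-s}\,ds$ together with the Dirichlet series $\sum d(n)n^{-s}=\zeta(s)^2$ gives $S_0(z)=\frac{1}{2\pi i}\int_{(c)}\Gamma(s)\zeta(s)^2(-2\pi iz)^{-s}\,ds$. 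Applying the same formula to $(1/z)S_0(-1/z)$ and then substituting $s\mapsto 1-s$ produces a twin integral of the form $\frac{1}{2\pi i}\int_{(1-c)}\Gamma(1-s)\zeta(1-s)^2(2\pi i)^{s-1}z^{-s}\,ds$.

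Next, I shift both contours to the critical line $(1/2)$: leftward from $(c)$, crossing the double pole at $s=1$ of the first integrand, and rightward from $(1-c)$, crossing the double pole at $s=0$ of the second. On the critical line, I use the Riemann functional equation in the symmetric form $\Gamma(s)\zeta(s)\cdot 2\cos(\pi s/2)=(2\pi)^s\zeta(1-s)$ to rewrite both integrands in terms of $\zeta(s)\zeta(1-s)\,z^{-s}$. After the trigonometric simplifications $\sin(\pi s)=2\sin(\pi s/2)\cos(\pi s/2)$ and $\sin(\pi s/2)+i\cos(\pi s/2)=ie^{-i\pi s/2}$, their difference collapses to $\frac{i}{\pi}Q(s)z^{-s}$, which integrates to $\frac{i}{\pi}A(z)$ by (\ref{inverseMellinA}). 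This yields
$$S_0(z) - \frac{1}{z}S_0(-1/z) \;=\; \frac{i}{\pi}\,A(z) + R_1 + R_0,$$
where $R_1, R_0$ denote the two residues.

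Inserting this into $\psi(z) = 1 - 1/z - 4\bigl[S_0(z)-(1/z)S_0(-1/z)\bigr]$ and solving for $A(z)$ gives
$$A(z) = \frac{i\pi}{4}\psi(z) - \frac{i\pi}{4}\Bigl(1-\frac{1}{z}\Bigr) + i\pi(R_1+R_0).$$
It remains to evaluate $R_1$ and $R_0$ by standard Laurent expansions, using $\zeta(1+u)=u^{-1}+\gamma+O(u)$, $\Gamma(1+u)=1-\gamma u+O(u^2)$, and the principal-branch identity $\log(-2\pi iz)=\log(2\pi)-i\pi/2+\log z$. A short computation yields $R_1=\tfrac{i}{2\pi z}[\gamma-\log(2\pi)+i\pi/2-\log z]$ and $R_0=\tfrac{i}{2\pi}[\gamma-\log(2\pi)-i\pi/2+\log z]$. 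Substituting, the two $\pm i\pi/4$ contributions cancel exactly and the surviving terms regroup into $c(1+1/z)+\tfrac{1}{2}(1/z-1)\log z = r(z)$, as claimed.

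The main technical point is bookkeeping rather than depth: one must justify the contour shifts (horizontal segments at $\pm iT$ vanish as $T\to\infty$, thanks to exponential decay of $\Gamma$ on vertical strips versus polynomial growth of $\zeta$) and track the principal branches of $\log z$, $(-2\pi iz)^{-s}$ and $(2\pi i)^{s-1}$ consistently. The identity is first established for $z$ in the upper half-plane, where the Mellin--Barnes representation of $S_0$ converges absolutely, and then extends to all $z\in\mathbb{C}'$ by analytic continuation of both sides.
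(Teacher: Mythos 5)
Your proposal is correct and follows essentially the same route as the paper's proof: the Mellin--Barnes representation of $S_0$ (equivalently $E_1$), the contour shift to the critical line picking up the double poles at $s=1$ and $s=0$, the functional equation plus the trigonometric identity $\sin(\pi s/2)+i\cos(\pi s/2)=ie^{-i\pi s/2}$ to collapse the difference into the inverse Mellin transform of $Q$, explicit Laurent expansions for the residues (your $R_1$, $R_0$ agree with the paper's $\nu(z)/\pi$ and $(-1/z)\nu(-1/z)/\pi$), and final analytic continuation from $\mathbb{H}$ to $\mathbb{C}'$. The only cosmetic difference is that you substitute $s\mapsto1-s$ before shifting the second contour rather than after, which is immaterial.
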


\begin{proof}
Recall the notation $\int_{(\beta)}=\int_{\beta - i \infty}^{\beta + i \infty}$, and that if $z\in\HH$, then $-iz=e^{-i\pi/2}z\in\re_{>0}$.

In this case, $|\arg(-2\pi i z)|<\pi/2$. Due to the expansion of $\zeta^2$ into its Dirichlet series, and the functional equation $$\zeta(s)=2^s \pi^{s-1} \sin(\pi s/2)\Gamma(1-s)\zeta(1-s),$$ one has, as in \cite{BC13b}, due to our definition $Q(s) = \Gamma\cdot\zeta(s)\ \Gamma\cdot\zeta(1-s)$, 
\begin{eqnarray*}
    E_1(z) & = & 1- \frac{4}{ 2\pi i} \int_{(2)} \Gamma(s)\zeta(s)^2 (-2\pi i z)^{-s} ds \\
        & = & 1- \frac{4}{ 2\pi^2 i} \int_{(2)} Q(s) \sin(\pi s/2) e^{\pi i s/2} z^{-s} ds.
\end{eqnarray*}
Notice the extra factor $1/\pi$ due to our normalization of $Q$, which is not the same in \cite{BC13b}. 
Since $Q$ has exponential decay at $\pm i \infty$, one can move the line of integration from $(2)$ to $(1/2)$, and then
\begin{eqnarray*}
    E_1(z)   & = & 1- \frac{4}{\pi} \nu (z) - \frac{4}{ 2\pi^2 i} \int_{(1/2)} Q(s) \sin(\pi s/2) e^{\pi i s/2} z^{-s} ds,
\end{eqnarray*}
where (the notation $r$ in \cite{BC13b} is $\nu$ here)
\begin{eqnarray*}
    \nu(z)  & = & \underset{s=1}{\rm Res}\  Q(s) \sin(\pi s/2) e^{\pi i s/2} z^{-s}.
\end{eqnarray*}
In the neighborhood of $1$, we write, putting $s=1+\e$,
\begin{eqnarray*} 
Q(s) \sin(\pi s/2) e^{\pi i s/2} z^{-s}   = \pi \frac{ \zeta(s) \zeta (1-s)}{2\cos (\pi s/2)} e^{\pi i s/2} e^{-s \log z}
\\  =  \frac{\pi}{2z} \frac{( \frac{1}{\e} + \gamma + O(\e)) (-\frac{1}{2} + \frac{\e}{2}\log (2 \pi) + O(\e^2)) }{ -\frac{\pi}{2}\e + O(\e^3)} \ i\left(1+\frac{\pi i}{2} \e+O(\e^2)\right) (1 - \e \log z + O(\e^2)). 
\end{eqnarray*} 
Therefore, as claimed in \cite{BC13b}, 
\begin{eqnarray*}
    \nu(z)  & = & \frac{\log(2\pi z)-\gamma-\pi i/2}{2 iz}.
\end{eqnarray*}
Moreover, for $z \in \mathbb{H}$, i.e. $0<\arg z<\pi$,
\begin{eqnarray*}
    -(1/z)\ E_1(-1/z) & = & -1/z + \frac{4}{\pi z} \nu(-1/z) - \frac{4}{ 2\pi^2 i} \int_{(1/2)} Q(s) \sin(\pi s/2) e^{\pi i s/2} (-1/z)^{1-s} ds \\
        & = & -1/z + \frac{4}{\pi z} \nu(-1/z) - \frac{4}{ 2\pi^2 i} \int_{(1/2)} Q(s) \cos(\pi s/2)\ i  e^{\pi i s/2} z^{-s} ds,
\end{eqnarray*}
where one used the change of variable $s\to 1-s$, the equation $Q(1-s)=Q(s)$, that\\ $\arg(-1/z)=\pi -\arg z$, giving $(-1/z)^s=e^{\pi i s}z^{-s}$, and then $e^{\pi i (1-s)/2} e^{\pi i s} = i e^{\pi i s/2}$.
Noticing that
$$\sin(\frac{\pi s}{2}) e^{i\frac{\pi s}{2}}+ i \cos(\frac{\pi s}{2}) e^{i\frac{\pi s}{2}} 
= i,$$
one then obtains
\begin{eqnarray*}
    \psi(z)  =  E_1(z) -(1/z) E_1(-1/z) = 1 - \frac{4}{\pi} \nu(z) - \frac{1}{z} + \frac{4}{ \pi z} \nu (-1/z)
    -\frac{4i }{ 2 \pi^2 i} \int_{(1/2)} Q(s) z^{-s} ds.
\end{eqnarray*}
Now, we write
$$\nu(z) = \frac{c}{iz} - \frac{\pi}{4z}+ \frac{\log z}{ 2 iz}, \quad
(-1/z) \nu(-1/z) = \frac{c}{i} - \frac{\pi}{4} + \frac{\log(-1/z)}{2 i}
= \frac{c}{i} - \frac{\pi}{4} + \frac{ i \pi - \log z}{2 i},
$$
and then 
$$ 
- \frac{4}{\pi} \nu(z)  + \frac{4}{ \pi z} \nu (-1/z)
= -\frac{4}{\pi} \left( \frac{c}{i} (1 + 1/z) + \frac{\pi}{4} (1 - 1/z) - \frac{\log z}{2i} (1 - 1/z) \right),  
$$
which implies, by plugging in $\psi$ and multiplying by $i \pi/4$,
$$\frac{i \pi}{4}\psi(z) = -  c (1 + 1/z) + \frac{\log z}{2} (1 - 1/z)   +\frac{1 }{ 2 \pi i} \int_{(1/2)} Q(s) z^{-s} ds.$$
We conclude, remarking that the inverse Mellin transform of $Q$ in the critical strip is precisely the function $A$ due to (\ref{inverseMellinA}), and extending the equality from $\mathbb{H}$ to $\mathbb{C}'$ by analytic continuation. 
\end{proof}

This way, the expression of $\psi$ in terms of Eisenstein series provides an expression for the analytic continuation of $A$ in $\HH$, and conversely, the initial definition of $A$ as an integral provides a workable expression for the analytic continuation of $\psi$ in $\re_{>0}$, the function $r$ being easy to handle.\\

Ramanujan's function $B$ is analytic in the strip $\{x+iy, -\pi<y<\pi\}$ and enjoys two simple functional equations:
$B(\bar{z}) = \overline{B(z)}$ and $B(-z)=B(z)$ (since $B$ is real and even for real $z$). For our purpose, we give the counter part of these equations for $A$, and a more elaborated one (iii).\\

\begin{lemm}
The following functional equations of $A$ hold:
\begin{itemize}
    \item[(i)] Inversion $z\mapsto 1/z$. For all $z\in\CC'$,
    \begin{eqnarray} A(1/z) & = & z A(z).
    \end{eqnarray}
    \item[(ii)] Symmetry $z\mapsto \bar z$. For all $z\in\CC'$,
    \begin{eqnarray} 
    A(\bar{z}) & = & \overline{A(z)}.
    \end{eqnarray}
    \item[(iii)] Symmetry $z\mapsto -z$. For all $z\in\HH$,
    \begin{eqnarray}
    A(z) + A(-z) & = & \frac{2\pi i}{z} S_0(-1/z) + \log(2\pi/z) -\gamma + \frac{\pi}{2}i.
    \end{eqnarray}
\end{itemize}
\end{lemm}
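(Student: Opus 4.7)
\medskip

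The plan is to treat the three identities in order, leveraging the integral representation on $\re_{>0}$, Schwarz reflection, and the Bettin--Conrey lemma.

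For (i), I would start from the integral representation $A(z)=\int_0^\infty \phi_1(xz)\phi_1(x)\,dx$ valid on $\re_{>0}$ (equation (\ref{Adroite})). On the positive real axis, the substitution $y=xz$ gives immediately $A(z)=(1/z)\,A(1/z)$. Since both sides of the claimed identity are holomorphic on $\C'$ (by the analytic continuation of $A$ obtained above), the identity extends to $\C'$ by uniqueness of analytic continuation.

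For (ii), the function $\phi_1$ is real on $(0,\infty)$, so the integral representation on $\re_{>0}$ shows that $A$ is real on the positive real axis. The map $z\mapsto \overline{A(\bar z)}$ is holomorphic on $\C'$ (which is stable under conjugation) and coincides with $A(z)$ on $\R_{>0}$, hence on the whole of $\C'$.

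The main point is (iii). Fix $z\in\HH$, and set $w=-1/z$, which again lies in $\HH$. Applying (i) with $1/w = -z$ gives
\begin{eqnarray*}
A(-z) \;=\; A(1/w) \;=\; w\,A(w) \;=\; -\frac{1}{z}\,A(-1/z).
\end{eqnarray*}
Since $z$ and $-1/z$ both lie in $\HH$, the Bettin--Conrey lemma applies to both $A(z)$ and $A(-1/z)$ with $\psi$ expressed directly by its defining formula via $E_1$. Adding,
\begin{eqnarray*}
A(z)+A(-z) \;=\; \frac{i\pi}{4}\Bigl[\psi(z)-\tfrac{1}{z}\psi(-1/z)\Bigr] + r(z) - \tfrac{1}{z}\,r(-1/z).
\end{eqnarray*}
A direct substitution of $\psi(z)=E_1(z)-(1/z)E_1(-1/z)$ and $\psi(-1/z)=E_1(-1/z)+z\,E_1(z)$ shows that the $E_1(z)$ contributions cancel, leaving $\psi(z)-\psi(-1/z)/z = -2\,E_1(-1/z)/z$. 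Using $E_1(-1/z)=1-4\,S_0(-1/z)$ (valid since $-1/z\in\HH$), this contributes $\frac{2\pi i}{z}S_0(-1/z)-\frac{i\pi}{2z}$.

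It then remains to compute $r(z)-r(-1/z)/z$. Here the only subtle point, and the main obstacle, is to keep track of the principal branch of $\log$ on $\C'$: for $z\in\HH$ one has $-1/z\in\HH$ and the identity
\begin{eqnarray*}
\log(-1/z) \;=\; i\pi - \log z
\end{eqnarray*}
holds with the principal determination. Plugging in the explicit form of $r$ and using this identity, the coefficients of $\log z$ collapse and one obtains
\begin{eqnarray*}
r(z) - \tfrac{1}{z}\,r(-1/z) \;=\; 2c - \log z + \tfrac{i\pi}{2} + \tfrac{i\pi}{2z} \;=\; \log(2\pi/z) - \gamma + \tfrac{i\pi}{2} + \tfrac{i\pi}{2z},
\end{eqnarray*}
where $2c=\log(2\pi)-\gamma$. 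The spurious $\frac{i\pi}{2z}$ cancels against the corresponding term coming from the $\psi$-contribution, producing precisely the right-hand side claimed in (iii).
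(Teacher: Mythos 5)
Your proof is correct, and parts (i) and (ii) are handled exactly as in the paper (change of variable on $\R_{>0}$ plus analytic continuation, and Schwarz reflection). For (iii) you take a genuinely different, though structurally parallel, route. The paper transports $A(-z)$ into the upper half-plane using the conjugation symmetry (ii), writing $A(-z)=\overline{A(-\bar z)}$ and then conjugating the Eisenstein series termwise, so that the final bookkeeping is $r(z)+r(-z)$ with the branch identity $\log(-z)=-i\pi+\log z$. You instead transport via the inversion symmetry (i), writing $A(-z)=-\tfrac1z A(-1/z)$ with $-1/z\in\HH$, substituting $-1/z$ directly into the defining formula for $\psi$, and the bookkeeping becomes $r(z)-\tfrac1z r(-1/z)$ with $\log(-1/z)=i\pi-\log z$. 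Both methods produce the same cancellation of the $E_1(z)$ terms, leaving $-\tfrac{2}{z}E_1(-1/z)$ from the $\psi$-part, and I have checked that your computation of the rational-logarithmic part, $r(z)-\tfrac1z r(-1/z)=2c-\log z+\tfrac{i\pi}{2}+\tfrac{i\pi}{2z}$, together with the cancellation of the spurious $\tfrac{i\pi}{2z}$, is exact. Your variant has the small advantage of never leaving the upper half-plane (no conjugates of $E_1$ are needed), at the cost of invoking (i), which the paper's proof of (iii) does not use; the paper's variant keeps the two simple symmetries more clearly separated, using only (ii) in the proof of (iii).
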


\begin{proof}
The functional equations (i) and (ii) follow by analytic continuation when considering $z\in\R_{>0}$: $A$ is then real and we can use the change of variable $x \mapsto x/z$ in the integral defining $A$. 

Proof of (iii) --- Now let $z\in\HH$. Note that $-\bar z, -1/z, -1/(-\bar{z}) \in \HH$, and that, due to (ii):
\begin{eqnarray}
    A(-z) & = & \overline{A(-\bar{z})}. 
\end{eqnarray}
So we want to compute $\overline{A(-\bar{z})}$.
Since $\overline{e(-\bar{z})}=\overline{e^{-2\pi i\bar z}}=e(z)$, we deduce
\begin{eqnarray*}
\overline{E_1(-\bar{z})} & = & E_1(z) \\
\overline{E_1(-1/(-\bar{z}))} & = & E_1(-1/z).
\end{eqnarray*}
Hence, from $\psi(z) = E_1(z)-(1/z)\ E_1(-1/z)$, we get
\bean
\overline{\psi(-\bar{z})} & = & \overline{E_1(-\bar{z})-(1/(-\bar{z}) )E_1(-1/(-\bar{z}))} \\
    & = & E_1(z) + (1/z)\ E_1(-1/z),
\eean
and then
\bean
A(z) + A(-z) & = & \frac{i\pi}{4}\psi(z) +r(z) +\overline{\frac{i\pi}{4}\psi(-\bar{z})} +r(-z) \\
     & = & -\frac{i\pi}{2 z} E_1(-1/z) +r(z)+r(-z).
\eean
Recall $\d r(z) = c\left(\frac{1}{z}+1\right) + \frac{1}{2}\left(\frac{1}{z}-1\right) \log(z)$. So, with $\log(-z)=-i\pi +\log z$, $0<{\rm Arg}(z)<\pi$, 
\bean
r(-z) & = & c \left(-\frac{1}{z}+1\right) - \frac{1}{2}\left(\frac{1}{z}+1\right) (-i\pi +\log z),
\eean
and then
\bean
r(z) + r(-z) & = & 2c -\log z +\frac{i\pi}{2z} +\frac{i\pi}{2}.
\eean
Therefore
\bean
A(z) + A(-z) & = &  -\frac{i\pi}{2 z} \left(E_1(-1/z)-1\right) + 2c -\log z +\frac{i\pi}{2},
\eean
which gives the desired expression due to $2c=\log(2\pi)-\gamma$ and the definition of $E_1$.
\end{proof}

\medskip

\subsection{Proof of Theorem \ref{main-lemma}}

We will prove the following instance of Theorem \ref{main-lemma} for all $k\ge 2$,
\begin{eqnarray*}
    \int_{-\infty}^{\infty} 
 \left|\zeta\left(\frac{1}{2}+it\right)\right|^{2k} \frac{e^{ k(\pi -\delta) t}}{\cosh(\pi t)^k} \ dt
& = &
2\ \frac{e^{ik\frac{\delta-\pi}{2}}}{\pi^{k-1} }
 \int_{\RR_{>0}^{k-1}} A\left(\frac{- e^{i \delta}}{u_1\cdots u_{k-1}} \right)
\prod_{j=1}^{k-1} A(- u_j e^{i \delta} ) \frac{du_j}{u_j},
\end{eqnarray*}
and then its general formulation with Dirac surface measures.\\

Recall that we have from (\ref{inversefourierB}), for all real $z$,
\begin{eqnarray*}
B(z) & = & \frac{1}{2 \pi} 
\int_{-\infty}^{\infty} e^{i z t}\left|\zeta\left(\frac{1}{2}+it\right)\right|^{2} \frac{\pi dt}{\cosh(\pi t)}.
\end{eqnarray*}

Let $B^{k\star}$ be the $k$-fold additive convolution of $B$, that is $B^{k\star}=B\star\cdots \star B$ ($k$ times), which is well-defined since $B\in L^1(\R)$. 

\begin{lemm}
We have for all real $z$,
\begin{eqnarray*}
B^{k\star}(z) & = & \frac{1}{2 \pi} 
\int_{-\infty}^{\infty} e^{i z t}\left|\zeta\left(\frac{1}{2}+it\right)\right|^{2k} \frac{\pi^k dt}{\cosh(\pi t)^k}.
\end{eqnarray*}
\end{lemm}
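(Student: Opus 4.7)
The plan is to invoke the convolution theorem for the Fourier transform, using the identity $\mathcal{F}B(t) = \left|\zeta(1/2+it)\right|^{2}\,\pi/\cosh(\pi t)$ established in (\ref{fourierB}).

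First, I would observe that since $B \in L^1(\RR)$, the convolution $B^{k\star}$ is well-defined and belongs to $L^1(\RR)$, with
\[
\cal F\bigl(B^{k\star}\bigr)(t) \;=\; \bigl(\cal F B(t)\bigr)^{k} \;=\; \left|\zeta\!\left(\tfrac{1}{2}+it\right)\right|^{2k} \frac{\pi^{k}}{\cosh(\pi t)^{k}},
\]
by the standard convolution theorem (which applies to $L^1$ functions), iterated $k-1$ times.

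Next, I would check that Fourier inversion applies pointwise. By Remark (a) just before the lemma, $B$ itself lies in the Schwartz class, so $\cal F B$ does as well, and therefore $(\cal F B)^{k}$ is Schwartz and in particular in $L^1(\RR)$. Thus Fourier inversion is valid and yields a continuous function $F(z) = \frac{1}{2\pi}\int_{-\infty}^{\infty} e^{izt}\,(\cal F B(t))^k\, dt$ that agrees with $B^{k\star}$ almost everywhere. To upgrade this to pointwise equality, I would note that $B^{k\star}$ is itself continuous: since $B$ is bounded and continuous and $B^{(k-1)\star} \in L^1(\RR)$, dominated convergence shows that $z \mapsto \int B(z-y)\,B^{(k-1)\star}(y)\,dy$ is continuous. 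Two continuous functions equal a.e.\ are equal everywhere, giving the stated identity.

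No serious obstacle is anticipated here; the lemma is essentially a bookkeeping step that repackages the formula (\ref{fourierB}) via the convolution theorem. The only mild care needed is distinguishing the $L^1$ convergence of $\cal F B$ (which justifies pointwise inversion) from the $L^2$ framework in which the Fourier transform was initially discussed — but Schwartz-class membership of $\cal F B$ resolves this immediately. With this lemma in hand, evaluating $B^{k\star}$ via the convolution integral on the one hand, and inverting via the above formula on the other, will set up the identification leading to Theorem \ref{main-lemma}.
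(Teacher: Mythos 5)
Your proof is correct and follows essentially the same route as the paper: apply the convolution theorem to get $\mathcal{F}(B^{k\star})=(\mathcal{F}B)^k$, verify that $(\mathcal{F}B)^k\in L^1(\RR)$ (the paper uses the crude bound $\mathcal{F}B(t)=O(e^{-|t|})$ where you invoke the Schwartz-class remark), and then invert. Your extra care in upgrading the a.e.\ identity from Fourier inversion to pointwise equality via continuity of $B^{k\star}$ is a detail the paper leaves implicit, but it does not change the argument.
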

\begin{proof}
Since $B,B^{k\star}\in L^1(\R)$, we can write $\cal F(B^{k\star}) = (\cal F B)^k$. Using a crude bound on $\zeta(1/2+it)$, we have, recalling (\ref{fourierB}),
\bean
\cal F B(t) = \left|\zeta\left(\frac{1}{2}+it\right)\right|^{2} \frac{\pi}{\cosh(\pi t)} =O(e^{-|t|}).
\eean
Therefore $(\cal F B)^k \in L^1(\R)$ and we can apply the inverse Fourier transform: for all real $z$,
\begin{eqnarray*}
B^{k\star}(z) = \cal F^{-1}((\cal F B)^k)(z) = \frac{1}{2 \pi} 
\int_{-\infty}^{\infty} e^{i z t}\left|\zeta\left(\frac{1}{2}+it\right)\right|^{2k} \frac{\pi^k dt}{\cosh(\pi t)^k},
\end{eqnarray*}
as desired.
\end{proof}

The right hand side of the last equation extends to an analytic function in $$\cal W_k=\{x+iy, -k\pi<y<k\pi\}.$$

We now give a multiple integral expression of $B^{k\star}$, which will later be extended to $\cal W_k$.

\begin{lemm}
Let $k\ge2$. For all real $z$, 
\begin{eqnarray} \label{Bk}
B^{k\star}(z) & = & \int_{\mathbb{R}^{k-1}} B(z/k - x_1 - x_2 - \dots - x_{k-1}) \prod_{j=1}^{k-1} B(z/k + x_j)  \prod_{j = 1}^{k-1} dx_j.
\end{eqnarray}
\end{lemm}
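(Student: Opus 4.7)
The plan is to reduce the identity to the standard iterated formula for the $k$-fold additive convolution of an $L^1$ function, and then apply a translation that symmetrizes the arguments of $B$.

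First I would establish, by induction on $k\ge 2$ starting from $B^{2\star}(z)=\int_{\RR} B(y_1) B(z-y_1)\,dy_1$ and using $B^{k\star} = B^{(k-1)\star} \star B$, the standard expression
\begin{eqnarray*}
B^{k\star}(z) & = & \int_{\RR^{k-1}} B(z - y_1 - \cdots - y_{k-1}) \prod_{j=1}^{k-1} B(y_j)\, \prod_{j=1}^{k-1} dy_j.
\end{eqnarray*}
All integrals are absolutely convergent since $B\in L^1(\RR)$, so Fubini--Tonelli justifies the iterated unfolding.

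Next I would perform the translation change of variables $y_j = z/k + x_j$ for $j=1,\ldots,k-1$, whose Jacobian is $1$. Under this substitution, $B(y_j) = B(z/k + x_j)$ and
\begin{eqnarray*}
z - y_1 - \cdots - y_{k-1} & = & z - (k-1)\frac{z}{k} - x_1 - \cdots - x_{k-1} \\
& = & \frac{z}{k} - x_1 - \cdots - x_{k-1},
\end{eqnarray*}
producing exactly the right-hand side of (\ref{Bk}).

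There is no real obstacle here: the identity is simply a rewriting of the convolution that distributes $z$ symmetrically among the $k$ factors of $B$. The point of this symmetric form is preparatory. When the identity is later continued analytically to $z\in\cal W_k=\{x+iy:-k\pi<y<k\pi\}$, each factor's argument will have imaginary part close to $\operatorname{Im}(z)/k\in(-\pi,\pi)$, lying in the strip $\cal W$ where $B$ is holomorphic. This will enable the analytic continuation of $B^{k\star}$ to $\cal W_k$ and, combined with (\ref{inverseMellinA}) and $B(v) = e^{v/2}A(e^v)$, will lead to Theorem \ref{main-lemma}.
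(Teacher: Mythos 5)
Your proof is correct and follows essentially the same route as the paper: unfold the $k$-fold convolution into an iterated integral (justified by $B\in L^1(\RR)$ and Fubini--Tonelli) and then translate each variable by $z/k$ to symmetrize the arguments. The paper's induction passes through telescoping variables $y_{j+1}-y_j$ before reaching the same intermediate formula, but this is only a difference of bookkeeping.
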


\begin{proof}
Recalling the definition
\begin{eqnarray*}
    B^{(k+1)\star}(z) & = & \int_{\mathbb{R}}   B(z-y_k) B^{k\star}(y_k) dy_k,
\end{eqnarray*}
we obtain, by an immediate induction on $k$, starting from $k=2$ with $\prod_{\varnothing}: =1$, 
\begin{eqnarray*} 
B^{k\star}(z) & = & \int_{\mathbb{R}^{k-1}} B(z - y_{k-1}) \left( \prod_{j = 1}^{k-2} B( y_{j+1}-y_j) \right) B(y_{1}) \prod_{j=1}^{k-1}
dy_j.
\end{eqnarray*}
Using the change of variable $x_{1} = y_{1}$, and for $1 \leq j \leq k-2$, 
$x_{j+1} = y_{j+1}-y_j$,  
we deduce
\begin{eqnarray*} 
B^{k\star}(z) & = & \int_{\mathbb{R}^{k-1}} B(z - x_1 - x_2 - \dots - x_{k-1})  \prod_{j = 1}^{k-1} B(x_j)  \prod_{j=1}^{k-1}
dx_j
\end{eqnarray*}
since $y_{k-1} = x_1 + x_2 + \dots + x_{k-1}$. We deduce the lemma by shifting $x_j$ by $z/k$ for $1 \leq j \leq k-1$.
\end{proof}

\begin{lemm} \label{boundB}
Set $\cal K_0=\{x+iy,\ |x|\leq x_0, |y|\leq y_0\}$ with $x_0>1$ and $1<y_0<\pi$. Then there exist $K_{1}(y_0), K_{2}(x_0,y_0)>0$ such that for all $w\in\cal K_0$ and real $u$,
\begin{eqnarray*}
|B(w+u)| & \leq & K_1(y_0) \\
(1+u^2)|B(w+u)| & \leq & K_2(x_0,y_0).
\end{eqnarray*}
Moreover, the right-hand side of (\ref{Bk}) defines an analytic function on $\cal W_k$. 
\end{lemm}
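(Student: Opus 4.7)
My plan is to derive both pointwise bounds directly from the Fourier inversion identity~(\ref{inversefourierB}) for $B$, and then to obtain the analyticity of the $k$-fold convolution extension from the resulting quadratic decay. For $z=x+iy$ with $|y|<\pi$ the integral in~(\ref{inversefourierB}) converges absolutely. Setting $z=w+u$ with $w\in\cal K_0$ and $u\in\RR$, the modulus of the integrand is $|\zeta(1/2+it)|^2\, e^{-\im(w)t}/\cosh(\pi t)$, which is \emph{independent} both of $u$ and of $\re(w)$. Since $|\im(w)|\leq y_0<\pi$, the factor $e^{-\im(w)t}/\cosh(\pi t)$ is bounded above by $2e^{-(\pi-y_0)|t|}$, which easily dominates the crude convexity bound $\zeta(1/2+it)=O(|t|^{1/4+\ve})$. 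The resulting integrand is dominated by an integrable function depending only on $y_0$, so $|B(w+u)|\leq K_1(y_0)$.

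For the quadratic decay in $u$, I would integrate by parts twice in~(\ref{inversefourierB}). Writing $g_w(t)=\tfrac{1}{2}e^{iwt}|\zeta(1/2+it)|^2/\cosh(\pi t)$, one obtains $u^2 B(w+u)=-\int_{-\infty}^{\infty} e^{iut}\, g_w''(t)\,dt$, the boundary terms vanishing by the exponential decay of $g_w$. The second derivative $g_w''(t)$ involves: (i) factors of $w$ and $w^2$ brought down by differentiating $e^{iwt}$, each bounded by a polynomial in $\sqrt{x_0^2+y_0^2}$; (ii) derivatives $\zeta'(1/2\pm it),\zeta''(1/2\pm it)$, which are polynomially bounded in $|t|$ by Cauchy's integral formula on small disks inside the critical strip; and (iii) derivatives of $1/\cosh(\pi t)$, which are bounded multiples of $1/\cosh(\pi t)$. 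Each resulting term retains the exponential factor $e^{-\im(w)t}/\cosh(\pi t)$ and is integrable with an $L^1$-norm depending only on $x_0$ and $y_0$, yielding $u^2|B(w+u)|\leq K(x_0,y_0)$, and hence $(1+u^2)|B(w+u)|\leq K_2(x_0,y_0)$ after combining with the first bound.

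To establish analyticity of the right-hand side of~(\ref{Bk}) on $\cal W_k$, I would fix an arbitrary compact $\cal K\subset\cal W_k$ and enclose the compact image of $\cal K$ under the map $z\mapsto z/k$ inside a box $\cal K_0$ of the required form (with suitable $x_0>1$ and $1<y_0<\pi$). The second bound of the lemma applies uniformly to each factor in the integrand of~(\ref{Bk}): $|B(z/k+x_j)|\leq K_2/(1+x_j^2)$ for $1\leq j\leq k-1$, and $|B(z/k-x_1-\cdots-x_{k-1})|\leq K_1$, for every $z\in\cal K$. The integrand is therefore dominated on $\cal K\times\RR^{k-1}$ by the integrable function $K_1\prod_{j=1}^{k-1}K_2/(1+x_j^2)$. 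Since each factor is holomorphic in $z$ on $\cal W_k$ by analyticity of $B$ on $\cal W$, Morera's theorem together with Fubini yields holomorphy of the integral on $\cal W_k$.

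The whole argument is largely mechanical once the crude polynomial bounds on $\zeta$ and its derivatives on the critical line are invoked, as already flagged in remark~(a) preceding the lemma. The only point needing mild care is tracking the dependence on $x_0$ through the polynomial-in-$w$ factors introduced by differentiating $e^{iwt}$, while verifying that the exponential factor $e^{-(\pi-y_0)|t|}$ always dominates the at-most-polynomial growth coming from $\zeta^{(n)}(1/2+it)$, $n=0,1,2$.
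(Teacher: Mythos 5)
Your proposal is correct and follows essentially the same route as the paper: the uniform bound from the Fourier inversion formula for $B$ using $|\im(w)|\leq y_0<\pi$, the quadratic decay via two integrations by parts combined with polynomial bounds on $\zeta$, $\zeta'$, $\zeta''$ on the critical line, and the domination of the integrand in (\ref{Bk}) by $K_1\prod_j K_2/(1+x_j^2)$ followed by Fubini and Morera. No gaps.
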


\begin{proof}
Let $w=x+iy\in\cal K_0$ and $u\in\R$. Then
\bean
B(w+u) & = & \frac{1}{2} 
\int_{-\infty}^{\infty} e^{i u t} e^{ixt-yt}\left|\zeta\left(\frac{1}{2}+it\right)\right|^{2} \frac{  dt}{\cosh(\pi t)}.
\eean
We first deduce that 
\begin{eqnarray} \label{K1}
|B(w+u)| \leq \frac{1}{2} 
\int_{-\infty}^{\infty} e^{y_0 |t|}\left|\zeta\left(\frac{1}{2}+it\right)\right|^{2} \frac{dt}{\cosh(\pi t)} =: K_1(y_0).
\end{eqnarray}
Set $z(t)=\zeta\left(\frac{1}{2}+it\right)\zeta\left(\frac{1}{2}-it\right)$ and  $f(t,x,y)=\frac{1}{2}e^{(ix-y)t} z(t)/\cosh(\pi t)$. 
Differentiating $f$ twice with respect to $t$, and using the fact that $\zeta$, $\zeta'$ and $\zeta''$ have at most polynomial growth on the critical line, one deduces that 
$$|f(t,x,y)| + |\partial_t  f(t, x, y)| + |\partial_t^2  f(t, x, y)| 
\ll e^{y_0 |t|} (x_0^2 + y_0^2) |t|^{O(1)} e^{- \pi |t|},$$
which is integrable on $\mathbb{R}$ and tends to zero at infinity.  

Integrating by part two times, we deduce 
\bean
B(w+u) & = & -\frac{1}{u^2} 
\int_{-\infty}^{\infty} e^{i u t} \partial^2_t f(t,x,y) dt.
\eean
Therefore, there exists $K_2(x_0,y_0)>0$ such that
\bean
(1+u^2)|B(w+u)| & \leq & K_2(x_0,y_0),
\eean
as desired.

Hence, we have for all $z\in k \mathcal{K}_0$ and all real $x_j$, 
\begin{eqnarray*}
\left| B(z/k - x_1 - x_2 - \dots - x_{k-1}) \prod_{j=1}^{k-1} B(z/k + x_j) \right| & \leq & \frac{K_1(y_0)K_2(x_0,y_0)^{k-1}}{(1+x_1^2)\cdots (1+x_{k-1}^2)},
\end{eqnarray*}
which is integrable with respect to the Lebesgue measure on $\R^{k-1}$.
Since any contour in $\mathcal{W}_k$ is included in a compact set of the form $k \mathcal{K}_0$ for suitable values of $x_0 > 1$ and $y_0 \in (1, \pi)$, one can apply Fubini's theorem to any integral 
of $B^{k \star}$ along a closed contour in $\mathcal{W}_k$, and deduce that such contour integral is equal to zero. Hence, 
 the right hand side of (\ref{Bk}) defines a holomorphic function on $\cal W_k$.
\end{proof}
From now, we keep the notation $B^{k\star}$ for its analytic continuation. \\


We can now go back to $A$ through (\ref{Bk}).
Changing $x_j\to u_j=e^{x_j}$, $1\leq j\leq k-1$, we obtain
\begin{eqnarray*}
B^{k\star}(z)   & = & e^{z/2} \int_{\mathbb{R}_+^{k-1}} A((u_1\cdots u_{k-1})^{-1}e^{z/k}) \prod_{j=1}^{k-1} A(u_je^{z/k})  \prod_{j = 1}^{k-1} \frac{du_j}{u_j}.
\end{eqnarray*}
Take $\delta\in(0,\pi)$ and $z=-ik(\pi-\delta)\in \cal W_k$, so 
\bean
e^{i z t} & = & e^{k(\pi-\delta)t} \\
e^{z/2} & = & e^{ik\frac{\delta-\pi}{2}} \\
e^{z/k} & = & - e^{i\delta}.
\eean
Therefore, for all $\delta\in(0,\pi)$,
\begin{equation*}
    \int_{-\infty}^{\infty} 
 \left|\zeta\left(\frac{1}{2}+it\right)\right|^{2k} \frac{e^{ k(\pi -\delta) t}}{\cosh(\pi t)^k} dt
\ =\ 
2\ \frac{e^{ik\frac{\delta-\pi}{2}}}{\pi^{k-1} }
 \int_{\RR_{>0}^{k-1}} A\left(\frac{- e^{i \delta}}{u_1\cdots u_{k-1}} \right)
\prod_{j=1}^{k-1} A(- u_j e^{i \delta} ) \frac{du_j}{u_j},
\end{equation*}
as claimed.\\

We finally give an expression of the $k$-fold convolution in terms of a Dirac surface measure. This is convenient to write a general formula including the case $k=1$. This will also prove fruitful when studying the case $k=3$.
\begin{lemm}
    We have for all $z\in\cal W_k$,
\begin{eqnarray}
    B^{k\star}(z) & = & \int_{\mathbb{R}^k}
\prod_{j=1}^k B(z/k + x_j)
\delta_0 \left( \sum_{j=1}^k x_j \right) \prod_{j = 1}^k dx_j,  \label{Bdistrib}
\end{eqnarray}
and, for $\delta\in(0,\pi)$,
\begin{eqnarray}
    B^{k\star}(-ik(\pi-\delta))
    & = & e^{ik\frac{\delta-\pi}{2}}
 \int_{\RR_{>0}^{k}}  
\prod_{j=1}^{k} A(- u_j e^{i \delta})\ \delta_1\left(\prod_{j = 1}^{k} u_j\right) \prod_{j = 1}^k du_j,
\end{eqnarray}
where $\delta_a(f)dv_1\cdots dv_k$ is the surface measure on the smooth manifold defined by $f(v_1,\ldots,v_k)=0$, which seen as a distribution is $\d \lim_{\e \rightarrow 0^+}\ \varepsilon^{-1} \mathds{1}_{f(v_1,\ldots,v_k) \in [a, a+\varepsilon]}$.
\end{lemm}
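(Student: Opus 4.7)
The plan is to deduce both identities directly from formula (\ref{Bk}) together with the distributional meanings given to $\delta_0$ and $\delta_1$; no new analytic input beyond this should be needed.

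For the first identity (\ref{Bdistrib}), I would introduce a symmetric auxiliary variable $x_k$ and regard the $(k-1)$-fold integration in (\ref{Bk}) as the restriction of a $k$-fold integration to the hyperplane $\{x_1+\cdots+x_k=0\}$, with $x_k=-(x_1+\cdots+x_{k-1})$ playing the role of the argument of the first $B$-factor in (\ref{Bk}). By the limit definition given just before the lemma, $\delta_0(x_1+\cdots+x_k)\,dx_1\cdots dx_k$, tested against a continuous integrable function $g$, equals the integral over $\mathbb{R}^{k-1}$ of $g$ with $x_k$ replaced by $-(x_1+\cdots+x_{k-1})$; this follows from the trivial change of variable $y_k=x_1+\cdots+x_k$, $y_j=x_j$ for $j<k$, whose Jacobian is $1$. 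Applying this to $g(x_1,\ldots,x_k)=\prod_{j=1}^{k}B(z/k+x_j)$ turns (\ref{Bk}) into (\ref{Bdistrib}).

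For the second identity I would specialize (\ref{Bdistrib}) at $z=-ik(\pi-\delta)\in\mathcal{W}_k$. With $z/k=-i(\pi-\delta)$, the identity $B(w)=e^{w/2}A(e^w)$ gives $B(z/k+x_j)=e^{-i(\pi-\delta)/2}e^{x_j/2}A(-e^{i\delta}e^{x_j})$. Taking the product over $j$ and pulling out the constant, the prefactor is $e^{-ik(\pi-\delta)/2}e^{(x_1+\cdots+x_k)/2}$, which on the hyperplane $\{\sum x_j=0\}$ simplifies to the announced $e^{ik(\delta-\pi)/2}$. It then remains to perform the change of variable $u_j=e^{x_j}$ and to check that $\delta_0(\sum x_j)\prod dx_j$ transforms into $\delta_1(\prod u_j)\prod du_j$.

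The only delicate point, which I expect to be the main obstacle, is this last transformation of surface measures. I would verify it straight from the limit definitions: the slab $\{0\le\sum x_j\le\varepsilon\}$ maps to $\{1\le\prod u_j\le e^{\varepsilon}\}$, the Lebesgue measure transforms as $\prod dx_j=(\prod u_j)^{-1}\prod du_j$, and in the limit $\varepsilon\to 0^+$ both $e^{\varepsilon}-1\sim\varepsilon$ and $(\prod u_j)^{-1}=1+O(\varepsilon)$ on this thin slab, so one recovers exactly $\delta_1(\prod u_j)\prod du_j$. Said differently, the two forms $(\prod u_j)^{-1}\prod du_j$ and $\prod du_j$ coincide as distributions on the constraint surface $\{\prod u_j=1\}$, and this equivalence is the only bookkeeping step requiring care; everything else is a routine substitution.
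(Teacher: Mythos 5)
Your argument is correct and follows essentially the same route as the paper: identify the $(k-1)$-fold integral in (\ref{Bk}) with the $k$-fold integral against the slab regularization of $\delta_0$ via a unit-Jacobian change of variables, then push forward under $u_j=e^{x_j}$, absorbing the factors $e^{\varepsilon}-1\sim\varepsilon$ and $(\prod_j u_j)^{-1}\to 1$ on the constraint surface exactly as the paper does. The only point to make explicit is that passing to the limit $\varepsilon\to 0^+$ requires dominated convergence, which is supplied by the decay estimates of Lemma \ref{boundB} rather than by continuity and integrability of the integrand alone.
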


\begin{proof}
The first case $k=1$ is only the definition of the Dirac mass at $0$:
\begin{eqnarray*}
    B(z) & = & \int_{\R} B(z+x) \delta_0(x) dx.
\end{eqnarray*}
We now only prove the case $k=2$ for $B$:
\bean
    B\star B(z) & = & \int_{\R^2} B(z/2+x_1)B(z/2+x_2) \delta_0(x_1+x_2) dx_1 dx_2,
    \eean
    the cases $k \geq 3$ being similar with heavier notation. 
Let us compute, with the change of variable $x_2=-x_1+\e x_2'$ for fixed $x_1$,
\bean
\int_{\R^2} B(\frac{z}{2}+x_1)B(\frac{z}{2}+x_2) \frac{\mathds{1}_{x_1+x_2 \in [0, \e]}}{\e}dx_1dx_2 & = & \int_\R B(\frac{z}{2}+x_1)\int_{-x_1}^{-x_1+\e} B(\frac{z}{2}+x_2) \frac{dx_2}{\e}dx_1 \\
    & = & \int_\R B(\frac{z}{2}+x_1)\int_{0}^{1} B(\frac{z}{2}-x_1+\e x_2') dx_2' dx_1.
\eean
The latter expression tends to $B\star B(z)$ as $\e\to0$ by dominated convergence, which can be applied because of the estimates in Lemma \ref{boundB}. 

Finally $\e^{-1}\mathds{1}_{x_1+x_2 \in [0, \e]}$ converges in the sense of distribution to $\delta_0(x_1+x_2)$, which is classically interpreted here as a weighted surface measure on the hyper-surface $x_1+x_2=0$, and denoted by $\delta_0(x_1+x_2) dx_1 dx_2$ (another notation is $d\sigma_f$ with $f(x_1,x_2)=x_1+x_2$ here). This convergence in distribution implies that the limit as $\e\to0$ of the left-hand side is equal to the integral of $B(z/2 + x_1) B(z/2 + x_2)$ with respect to the surface measure $\delta_0 (x_1 + x_2)$, which is then equal to the limit $B \star B (z)$ of the right-hand side. 

To obtain the desired expression with $A$, we write as an approximation in the integral (\ref{Bdistrib}) using the change of variable $u_j=e^{x_j}$,
$$
\frac{\mathds{1}_{0\leq x_1+\ldots+x_k \leq \e}}{\e}\prod_{j=1}^k dx_j  \   = \ \frac{\mathds{1}_{1\leq u_1 \cdots u_k \leq e^\e}}{\e}\prod_{j = 1}^{k} \frac{du_j}{u_j} \\
  \   = \ \frac{e^\e-1}{\e}\cdot \frac{\mathds{1}_{1\leq u_1\cdots u_k \leq 1+e^\e-1}}{e^\e-1}\prod_{j = 1}^{k} \frac{du_j}{u_j},
$$
and then take the limit as $\e\to0^+$, to obtain
\begin{eqnarray*}
    B^{k\star}(z)
    & = & e^{ik\frac{\delta-\pi}{2}}
 \int_{\RR_{>0}^{k}}  
\prod_{j=1}^{k} A(- u_j e^{i \delta})\ \delta_1\left(\prod_{j = 1}^{k} u_j\right) \prod_{j = 1}^k du_j,
\end{eqnarray*}
since the support of the measure is the hyper-surface for which $u_1\cdots u_k=1$ in the integral. The formulation of Theorem \ref{main-lemma} then follows for all $k\ge1$.
\end{proof}

\medskip

\section{Proof of the formulas for the weighted moment $k=1,2,3$}
\label{proofweightedmoment}
We now set
\begin{eqnarray*}
M_{2k}(\delta) & = & \int_{-\infty}^{\infty} 
 \left|\zeta\left(\frac{1}{2}+it\right)\right|^{2k} \frac{e^{ k(\pi -\delta) t}}{\cosh(\pi t)^k} \ dt,
\end{eqnarray*}
and always consider $\delta\in(0,\pi/2)$.

\subsection{The cases $k=1$ and $k=2$}

Let us prove Theorem \ref{moment4} with our method.\\ 

Recall that the quantity $M_2(\delta)$ is the value, for $z = -i(\pi - \delta)$, of the expression 
$B(z)= e^{z/2}A(e^z)$, which is  analytic on $\cal W_1=\{x+iy, -\pi<y<\pi\}$. 
This is nothing but the case $k=1$ of Theorem \ref{main-lemma}:
\begin{eqnarray*}
M_{2}(\delta)    & = & -2i\ e^{i\delta/2} A(-e^{i\delta}) \\
     & = & 2i\ e^{-i\delta/2} A(-e^{-i\delta}),
\end{eqnarray*}
the last equality being due to the fact that $M_2(\delta)$ is real. 
Now from (iii), since $-e^{-i\delta}\in\HH$,
\begin{eqnarray*}
A(-e^{-i\delta}) & = & -A(e^{-i\delta}) + \frac{2\pi i}{-e^{-i\delta}} S_0\left(-\frac{1}{-e^{-i\delta}}\right) + \log(-2\pi/e^{-i\delta}) -\gamma + \frac{i\pi}{2}.
\end{eqnarray*}
Therefore
\begin{eqnarray*}
M_{2}(\delta)     & = & 4\pi \ e^{i\delta/2} \ S_0(e^{i\delta})  + \ 2i\ e^{-i\delta/2} \left( \log(2\pi)-\gamma -\frac{i\pi}{2} - A(e^{-i\delta}) + i\delta \right),
\end{eqnarray*}
as desired.\\

Let us know consider the case of $M_4$. From Theorem \ref{main-lemma} with $k=2$, we have
\begin{eqnarray*}
M_4(\delta) & = & \frac{2}{\pi} e^{i(\delta-\pi)}
 \int_0^\infty A\left(\frac{- e^{i \delta}}{u} \right)
 A(- u e^{i \delta} ) \frac{du}{u}.
\end{eqnarray*}
Since, changing $u\to1/u$,
\begin{eqnarray*}
    \int_1^\infty A\left(\frac{- e^{i \delta}}{u} \right)
 A(- u e^{i \delta} ) \frac{du}{u} & = & \int_0^1 A(- u e^{i \delta} ) A\left(\frac{- e^{i \delta}}{u} \right) \frac{du}{u},
\end{eqnarray*}
we get, using $A(- e^{i \delta}/u ) = -ue^{-i \delta} A(- u e^{-i \delta} )$, 
\begin{eqnarray*}
M_4(\delta) & = & -\frac{4}{\pi} e^{i\delta}
 \int_0^1 A\left(\frac{- e^{i \delta}}{u} \right)
 A(- u e^{i \delta} ) \frac{du}{u} \\
    & = & \frac{4}{\pi} 
 \int_0^1 A(- u e^{-i \delta} )
 A(- u e^{i \delta} ) du \\
    & = & \frac{4}{\pi} \int_0^1  \left|A(- u e^{i \delta} )\right|^2 du.
\end{eqnarray*}
Now, from the functional equation (iii),
\begin{eqnarray*}
A(-u e^{i\delta}) = -A(u e^{i\delta}) + \frac{2\pi i}{u e^{i\delta}} S_0(-1/(u e^{i\delta})) + \log(2\pi/(u e^{i\delta})) -\gamma + \frac{i\pi}{2} = S(u) + R(u),
\end{eqnarray*}
where
\begin{eqnarray} 
S(u) & = & 2\pi i\ e^{-i\delta} S_0(-e^{-i\delta}/u)/u \label{defS}\\
R(u) & = & -A(ue^{i\delta}) -\log(u) + \log(2\pi) -\gamma + \frac{i\pi}{2} - i\delta.  \label{defR}
\end{eqnarray}
Therefore 
\begin{eqnarray*}
\frac{\pi}{4} M_4(\delta) & = & \int_0^1 |S(u)|^2 du +  2\ \re\int_0^1 \b S(u) R(u) du + \int_0^1 |R(u)|^2 du.
\end{eqnarray*}
From (\ref{defS}), we have
\begin{eqnarray*}
\int_0^1 |S(u)|^2 du  = 4\pi^2 \int_0^1 |S_0(-e^{-i\delta}/u)|^2 \frac{du}{u^2} 
= 4\pi^2 \int_1^\infty \left|S_0(-e^{-i\delta}u)\right|^2 du,
\end{eqnarray*}
and then
\begin{eqnarray*}
M_4(\delta) & = & 16\pi \int_1^\infty \left|S_0(-e^{-i\delta}u)\right|^2 du + \wdt{R_1}(\delta) + \wdt{R_2}(\delta),
\end{eqnarray*}
where
\begin{eqnarray*}
    \wdt{R_1}(\delta) & = & \frac{8}{\pi}\ \re\int_0^1 \b S(u) R(u) du \\
    \wdt{R_2}(\delta) & = & \frac{4}{\pi} \int_0^1 |R(u)|^2 du.
\end{eqnarray*}

Let us now estimate these remainders. To study $\wdt{R_2}(\delta)$, we need to bound $A$.\\ If $\operatorname{Arg}(z) \in (-\pi/4, \pi/4)$, the initial integral form of $A$ gives 
$$|A(z)| \ll \int_0^{\infty} 
\min( 1, |tz|^{-1} ) \min(1, t^{-1}) dt,$$
and for $|z| \leq 1$, 
$$|A(z)| \ll \int_0^{1} dt
+ \int_{1}^{|z|^{-1}} t^{-1} dt + \int_{|z|^{-1}}^{\infty} t^{-2} |z|^{-1} dt \ll 1 + \log (1/|z|).$$
Therefore, from (\ref{defR}), $\wdt{R_2}(\delta) \ll 1$ when $0 < \delta < \pi/4$. Moreover, from Titchmarsh \cite{Tit86} p.167, we have
\begin{eqnarray*}
    \int_0^1 |S(u)|^2 du & \ll & \delta^{-1} \log^4(\delta^{-1}).
\end{eqnarray*}
Thus, by the Cauchy-Schwarz inequality, 
\begin{eqnarray*}
    \wdt{R_1}(\delta) & \ll & \delta^{-1/2} \log^2(\delta^{-1}),
\end{eqnarray*}
and the proof of Theorem \ref{moment4} is complete.

\subsection{The case $k=3$}

From Theorem \ref{main-lemma} with $k=3$, we have, since $(e^{-i\pi/2})^3=(-i)^3=i$,
\begin{eqnarray*}
M_{6}(\delta) & = &  \frac{2i}{\pi^2} e^{i\frac{3}{2}\delta} \int_{\R_{>0}^3} 
 A ( -u e^{i\delta})
 A (-v e^{i\delta}) 
 A (-w e^{i\delta})
 \delta_1(uvw) du dv dw.
\end{eqnarray*} 
Since $uvw = 1$ in the integral, two of the variables are in $I_0=(0,1]$ (resp. $I_1=[1, \infty)$) and the last one is in $[1, \infty)$ (resp. $(0,1]$). So we can integrate on the following two subsets:
\begin{eqnarray*}
    J_1 & = & (I_1\times I_0^2) \bigcup (I_0\times I_1\times I_0) \bigcup (I_0^2\times I_1) \\
    J_2 & = & (I_0\times I_1^2) \bigcup (I_1\times I_0\times I_1) \bigcup (I_1^2\times I_0).
\end{eqnarray*}
Using the two simple functional equations of $A$, we can write
$$
A(- e^{i \delta}/u ) = -ue^{-i \delta} A(- u e^{-i \delta}) = -ue^{-i \delta} \overline{A(- u e^{i\delta})}.
$$
Therefore, changing $u,v,w$ to their inverse in $J_2$ and using $uvw = 1$ in the integral, we have
\begin{eqnarray*}
\pi^2 M_6(\delta) & = & \ 2i e^{i\frac{3}{2}\delta} \left( \int_{J_1} \ +\ \int_{J_2} \right) \\
    & = & \ 2i e^{i\frac{3}{2}\delta} \int_{J_1} 
 A ( -u e^{i\delta})
 A (-v e^{i\delta}) 
 A (-w e^{i\delta})
 \delta_1(uvw) du dv dw \\
    &  & -\ 2i\ e^{i\frac{3}{2}\delta} e^{-3i\delta} \int_{J_1} 
 \overline{A ( -u e^{i\delta}) A (-v e^{i\delta}) A(-w e^{i\delta})}
 \delta_1(uvw) du dv dw \\
    & = & 4\ \re \left(i e^{i\frac{3}{2}\delta} \int_{J_1} 
 A ( -u e^{i\delta})
 A (-v e^{i\delta}) 
 A (-w e^{i\delta})
 \delta_1(uvw) du dv dw\right) \\
    & = & 12\ \re \left( i e^{i\frac{3}{2}\delta} \wdt{M_6}(\delta) \right),
\end{eqnarray*} 
where, considering the three symmetric choices in $J_1$,
\begin{eqnarray*}
\wdt{M_6}(\delta)  & = & \int_0^1\int_0^{1} \int_1^{\infty} 
 A ( -u e^{i\delta}) A (-v e^{i\delta}) A (-w e^{i\delta})
 \delta_1(uvw) du dv dw.
\end{eqnarray*} 
For $u, v$ fixed, we have, in the sense of the distributions,
$$\delta_1(uvw)dw = \underset{\varepsilon \rightarrow 0}{\lim}  \frac{\mathds{1}_{uvw \in [1, 1+\varepsilon]}}{\e} 
= \underset{\varepsilon \rightarrow 0}{\lim} \frac{\mathds{1}_{w \in [\frac{1}{uv}, \frac{1+\varepsilon}{uv}]}}{\e}
=
\underset{\eta \rightarrow 0}{\lim} \frac{\mathds{1}_{w \in [\frac{1}{uv},\frac{1}{uv}+\eta]}}{(uv)\eta}
=
\frac{1}{uv}\  \delta_{\frac{1}{uv}}(w)dw,$$
Therefore
\begin{eqnarray*}
\wdt{M_6}(\delta)  & = & \int_0^1\int_0^{1} \int_1^{\infty} 
 A ( -u e^{i\delta}) A (-v e^{i\delta}) A (-w e^{i\delta})
 \delta_1(uvw)dw\ du dv \\
        & = & \int_0^1\int_0^{1} A ( -u e^{i\delta}) A (-v e^{i\delta}) A (- e^{i\delta}/(uv)) \frac{dudv}{uv}\\
        & = & -e^{-i \delta} \int_0^1\int_0^{1} A ( -u e^{i\delta}) A (-v e^{i\delta}) \overline{A(- uv e^{i \delta})} dudv,
\end{eqnarray*} 
where we again used for the last equality
$$
A(- e^{i \delta}/(uv) ) = -uv\ e^{-i \delta} A(- uv e^{-i \delta}) = -uv\ e^{-i \delta} \overline{A(- uv e^{i\delta})}.
$$
Now, we expand the product
\begin{eqnarray*}
    A ( -u e^{i \delta})
 A (-v e^{i \delta})
 \overline{ A (-(uv) e^{i \delta})}
    & = & \left(S(u) + R(u)\right)(S(v) + R(v))\left(\b S(uv) + \b R(uv)\right) \\
    & = & S(u)S(v)\b S(uv) \\
    &   & + S(u)S(v)\b R(uv) + S(u)\b S(uv)R(v) + S(v)\b S(uv)R(u) \\
    &   & + \b S(uv)  R(u)R(v)+ S(u)R(v)\b R(uv) + S(v)R(u)\b R(uv) \\
    &   & + R(u)R(v)\b R(uv).
\end{eqnarray*}
We have
\begin{eqnarray*} 
S(u)S(v)\b S(uv)  & = & 2\pi i\ \frac{e^{-i\delta}}{u} S_0(-e^{-i\delta}/u) \cdot 2\pi i\ \frac{e^{-i\delta}}{v} S_0(-e^{-i\delta}/v) \cdot 2\pi (-i)\ \frac{e^{i\delta}}{uv} S_0(e^{i\delta}/uv) \\
    & = & 8\pi^3 i\ \frac{e^{-i\delta}}{u^2v^2} 
        S_0(-e^{-i\delta}/u) S_0(-e^{-i\delta}/v) S_0(e^{i\delta}/uv).
\end{eqnarray*}
Therefore
\begin{eqnarray*}
-e^{-i \delta} \int_0^1\int_0^{1} S(u)S(v)\overline{S}(uv) dudv & = & 
  -  8\pi^3 i e^{-2i\delta} \int_0^1\int_0^{1} S_0(-e^{-i\delta}/u) S_0(-e^{-i\delta}/v) S_0(e^{i\delta}/(uv)) \frac{dudv}{u^2v^2} \\
    & = & - 8\pi^3 i e^{-2i\delta} M,
\end{eqnarray*}
where, changing $u\to 1/u$ and $v\to 1/v$,
\begin{eqnarray*}
M & = & \int_1^\infty\int_1^{\infty} S_0(-e^{-i\delta} u) S_0(-e^{-i\delta} v) S_0(e^{i\delta} uv) dudv .
\end{eqnarray*}
We write
\begin{eqnarray*}
\wdt{M_6}(\delta)    & = & - 8\pi^3 i e^{-2i\delta} M-e^{-i\delta} \rho,
\end{eqnarray*}
where $\rho$ is the double integral $\int_0^1\int_0^{1}$ of the seven remaining terms. Therefore
\begin{eqnarray*}
M_6(\delta) 
    & = & \frac{12}{\pi^2}\ \re \left( i e^{i\frac{3}{2}\delta} \wdt{M_6}(\delta) \right) \\
    & = & \frac{12}{\pi^2}\ \re \left( i e^{i\frac{3}{2}\delta} \left(- 8\pi^3 i e^{-2i\delta} M -e^{-i\delta} \rho\right)\right) \\
    & = & 96\pi\ \re \left(e^{-i\frac{\delta}{2}} M \right) -\frac{12}{\pi^2}\ \re \left(i e^{i\frac{\delta}{2}} \rho \right),
\end{eqnarray*} 
which gives the desired expression noting that $\re \left(e^{-i\frac{\delta}{2}} M \right)=\re \left(e^{i\frac{\delta}{2}} \b M \right)$.\\

We now want to estimate the various remainders $R_1,\ldots,R_5$ as defined in the theorem.\\ Let $A_1,A_2,A_3$ be square integrable functions such that $\int_0^u |A_j(v)|^2dv=O(u^\alpha)$, $\alpha>0$, as $u\to 0^+$. (The $A_j$'s will be either $R$ or $S$.) 
The quantity to be bounded is 
\begin{eqnarray*}
    Q^2_{A_1A_2A_3} := \left| \int_0^1\int_0^1 A_1(u)A_2(v)\b A_3(uv) dudv \right|^2 \leq \left(\int_0^1 |A_1(u) | \int_0^1  |A_2(v) \b A_3(uv)| dv\ du\right)^2.
\end{eqnarray*}
We have, using the Cauchy-Schwarz inequality two times,
\begin{eqnarray*}
    Q^2_{A_1A_2A_3}  & \leq & \int_0^1 |A_1(u)|^2 du \int_0^1 \left|\int_0^1  |A_2(v) A_3(uv)| dv\right|^2 du \\
        & \leq & \int_0^1 |A_1(u)|^2 du \int_0^1 |A_2(u)|^2 du  \int_0^1 \int_0^1 |A_3(uv)|^2 dvdu.
\end{eqnarray*}
By a change of variable and an integration by parts, we can write
\begin{eqnarray*}
\int_0^1 \int_0^1 |A_3(uv)|^2 dvdu = \int_0^1 \frac{1}{u}\int_0^u  |A_3(v)|^2 dv\ du = - \int_0^1 |A_3(u)|^2 \log(u) du,
\end{eqnarray*}
since $\log(x) \int_0^x |A_3(v)|^2 dv\to 0$ as $x\to0^+$ due to the assumption on $A_3$.
Hence,
\begin{eqnarray*}
Q^2_{A_1A_2A_3}  & \leq & \int_0^1 |A_1(u)|^2 du \int_0^1 |A_2(u)|^2 du  \int_0^1 |A_3(u)|^2 |\log(u)| du.
\end{eqnarray*}
So we need to consider
\begin{eqnarray*}
    Q_S^2 =  \int_0^1 |S(u)|^2 du, & \quad &
    Q_{Sl}^2 = \int_0^1 |S(u)|^2 |\log u| du, \\
    Q_{R}^2 =  \int_0^1 |R(u)|^2 du,  & \quad &
    Q_{Rl}^2 = \int_0^1 |R(u)|^2 |\log u| du.
\end{eqnarray*}

We already know from the previous proof that $Q_S^2 \ll \delta^{-1} \log^4(\delta^{-1})$ and $Q_R^2 \ll 1$.\\ 

Let us now study $Q_{Sl}^2$. For $\delta > 0$ sufficiently small and $u \leq \delta^{2}$, we get
$$
|S(u)| \ll \frac{1}{u} \sum_{n \geq 1} n\ e^{- 6 n \delta /u }
\ll  \frac{1}{u} \sum_{n \geq 1} e^{n \delta /u} e^{- 6 n \delta /u }
\ll  \frac{1}{u} \sum_{n \geq 1} e^{ - 5 n \delta /u }
,
$$ 
and then, since $1/\delta \leq \delta/u$,
$$
|S(u)| \ll \delta^{-1} (\delta/u) e^{-5 \delta/u} \ll \delta^{-1} e^{-\delta/u} \ll \delta^{-1} e^{-\delta^{-1}}.
$$
Thus, noticing that $\int_0^{\delta^2} |\log u| du\ll 1$ and $|\log v|=\log (1/v)\leq \log(\delta^{-2})$ if $\delta^2\leq v\leq 1$,
\begin{eqnarray*}
Q_{Sl}^2 & = & \int_0^{\delta^2} |S(u)|^2 |\log u| du +\int_{\delta^2}^1 |S(v)|^2 |\log v| dv \\
         & \ll & e^{-\delta^{-1}/2} + \log(\delta^{-1}) \int_0^1 |S(v)|^2 dv \\
         & \ll & \delta^{-1}\log^5(\delta^{-1}).
\end{eqnarray*}

Since $R(u)\ll 1+\log(1/u)$, we also have $Q_{Rl}^2 \ll 1$. \\

Finally, it remains to replace the $A_j$ by $R$ or $S$ accordingly to obtain the domination of the various remainders. For instance,
\begin{eqnarray*}
R_1(\delta) \ll Q_{RSS} \ll Q_{R} Q_{S} Q_{Sl} 
         \ll  \delta^{-1}\log^{9/2}(\delta^{-1}).
\end{eqnarray*}
This completes the proof of Theorem \ref{main-theorem}.

\begin{rema} \rm
    Starting form our new formula, the previous method of estimation, which is crude since the Cauchy-Schwarz inequality kills the interaction between the three Eisenstein series, is still sufficiently good to obtain that the main term satisfies: 
    \begin{eqnarray*}
M \ll Q_{SSS} \ll Q_{S} Q_{S} Q_{Sl} 
         \ll  \delta^{-3/2}\log^{13/2}(\delta^{-1}).
\end{eqnarray*}
Notice that the bound $\delta^{-k/2-\e}$ for the $2k$th moment is obtained for all $k\geq 3$ by a more elaborated method in \cite{Tit86} p.173.  
\end{rema}

\bigskip

\section*{Acknowledgement} 
The first author is very grateful to Francois Alouges, Michel Balazard, Karim Belabas, Christophe Delaunay, Alessandro Fazzari, Erwan Hillion, and Olivier Ramar\'e for interesting discussions and references. The authors also thank Francois Alouges for his carefull reading and corrections.

Part of this work was done while the first author was visiting IRL Centre de Recherche Math\'ematique, CNRS, Universit\'e de Montr\'eal, supported by Centre National de la Recherche Scientifique (CNRS, France). These institutions are gratefully acknowledged. 
\bigskip

\end{document}